\documentclass[12pt]{article}

\usepackage{amsmath, epsfig, cite}
\usepackage{amssymb}
\usepackage{amsfonts}
\usepackage{latexsym}
\usepackage{amsthm}

\newtheorem{thm}{Theorem}[section]

\newtheorem{cor}[thm]{Corollary}

\newtheorem{lem}[thm]{Lemma}

\numberwithin{equation}{section}

\renewcommand{\thefootnote}{}

\begin{document}

\begin{center}
{\large\bf Two parametric $q$-supercongruences from a summation
formula for $q$-series

 \footnote{ Corresponding author$^*$. Email addresses: weichuanan78@163.com (C. Wei), 378075837@qq.com (G. Ruan)}}
\end{center}

\renewcommand{\thefootnote}{$\dagger$}

\vskip 2mm \centerline{Chuanan Wei, Guozhu Ruan$^*$}
\begin{center}
{School of Intelligent Medicine and Technology\\ Hainan Medical
University, Haikou 571199, China}
\end{center}

%%date: January 4, 2011
%\vskip 5mm
%\noindent {\it Suggested Running title}: Two Identities of Gould

\vskip 0.7cm \noindent{\bf Abstract.}  With the help of a summation
formula for $q$-series and the creative microscoping method, we
shall establish two parametric $q$-supercongruences. They are both
modulo the third power of a cyclotomic polynomial. When $q\to1$, one
of them is able to engender the following conclusion: for any prime
$p\equiv2\pmod{3}$ and any nonnegative integer $s$ subject to $
s\leq (p-2)/3$,
\[\sum_{k=s}^{(p+1)/3+s}(6k-1)\frac{(-\frac{1}{3})_{k-s}(-\frac{1}{3})_{k+s}(-\frac{1}{3})_{k}}{(k-s)!(k+s)!k!}
\equiv 0\pmod{p^3}.\]

\vskip 3mm \noindent {\it Keywords}: supercongruence;
$q$-supercongruence; a summation formula for $q$-series; creative
microscoping method

 \vskip 0.2cm \noindent{\it AMS
Subject Classifications:} 33D15; 11A07; 11B65

\section{Introduction}
For a complex variable $x$ and a nonnegative integer $n$, define the
shifted-factorial by
\[(x)_{0}=1\quad \text{and}\quad (x)_{n}
=x(x+1)\cdots(x+n-1)\quad \text{when}\quad n\in\mathbb{Z}^{+}.\] Let
$p$ be a prime and let $\mathbb{Z}_p$ denote the ring of all
$p$-adic integers. Define Morita's $p$-adic Gamma function (cf.
\cite[Chapter 7]{Robert}) to be
 \[\Gamma_{p}(0)=1\quad \text{and}\quad \Gamma_{p}(n)
=(-1)^n\prod_{\substack{1\leqslant k< n\\
p\nmid k}}k,\quad \text{when}\quad n\in\mathbb{Z}^{+}.\] Observing
that $\mathbb{N}$ is a dense subset of $\mathbb{Z}_p$ related to the
$p$-adic norm $|\cdot|_p$, for each $x\in\mathbb{Z}_p$, the
definition of $p$-adic Gamma function can be extended as
 \[\Gamma_{p}(x)
=\lim_{\substack{n\in\mathbb{N}\\
|x-n|_p\to0}}\Gamma_{p}(n).\]

 1997, Van Hamme \cite[(F.2)]{Hamme} conjectured the interesting
supercongruence: for any prime $p\equiv1\pmod{6}$,
\begin{align}
\sum_{k=0}^{(p-1)/3}(-1)^k(6k+1)\frac{(\frac{1}{3})_k^3 }{k!^3}
\equiv p\pmod{p^{3}}.
 \label{Hamme-a}
\end{align}
After some years, Swisher  \cite{Swisher} employed Long's method to
prove  \eqref{Hamme-a} and several other results.
 In 2017, He \cite{He} discovered the following supercongruence associated with \eqref{Hamme-a}: for any prime $p$,
\begin{equation}\label{eq:he}
\sum_{k=0}^{p-1}(6k+1)\frac{(\frac{1}{3})_k^3}{k!^3}\equiv
\begin{cases} \displaystyle p\Gamma_p\big(\tfrac{2}{3}\big)^3  \pmod{p^2}, &\text{if $p\equiv 1\pmod 6$,}\\[10pt]
\displaystyle -6\Gamma_p\big(\tfrac{2}{3}\big)^3\pmod{p^2},
&\text{if $p\equiv 5\pmod 6$.}
\end{cases}
\end{equation}
Some years later, Guo and Schlosser \cite{GS23} proved that
\eqref{eq:he} is true modulo $p^3$ in the $p\equiv 1\pmod 6$ case
and Wang and Sun \cite{Wang} showed that \eqref{eq:he} is right
modulo $p^3$ in the $p\equiv 5\pmod 6$ case. Namely,
\begin{equation}\label{eq:Gw}
\sum_{k=0}^{p-1}(6k+1)\frac{(\frac{1}{3})_k^3}{k!^3}\equiv
\begin{cases} \displaystyle p\Gamma_p\big(\tfrac{2}{3}\big)^3  \pmod{p^3}, &\text{if $p\equiv 1\pmod 6$,}\\[10pt]
\displaystyle -6\Gamma_p\big(\tfrac{2}{3}\big)^3\pmod{p^3},
&\text{if $p\equiv 5\pmod 6$.}
\end{cases}
\end{equation}

 For two complex numbers $x$ and $q$ with $|q|<1$ and a nonnegative integer $n$, define the $q$-shifted factorial
 as
 \begin{equation*}
(x;q)_{\infty}=\prod_{k=0}^{\infty}(1-xq^k)\quad\text{and}\quad
(x;q)_n=\frac{(x;q)_{\infty}}{(xq^n;q)_{\infty}}\quad
\text{when}\quad n\in\mathbb{Z}^{+}\cup\{0\}.
 \end{equation*}
For the aim of simplicity, we usually adopt the compact notation:
\begin{equation*}
(x_1,x_2,\dots,x_m;q)_{n}=(x_1;q)_{n}(x_2;q)_{n}\cdots(x_m;q)_{n},
 \end{equation*}
where $m\in\mathbb{Z}^{+}$ and $n\in\mathbb{Z}^{+}\cup\{0,\infty\}.$

In 2019, Guo \cite{Guo-19} established the following $q$-analogue of
\eqref{Hamme-a}: for any positive integer $n\equiv 1\pmod 3$,
\begin{align*}
\sum_{k=0}^{(n-1)/3}(-1)^k[6k+1]\frac{(q;q^3)_k^3}{(q^3;q^3)_k^3}q^{(3k^2+k)/2}
\equiv (-1)^{(n-1)/3}[n]q^{(n-1)(n-2)/6}\pmod{[n]\Phi_n(q)^2}.
\end{align*}
Here and all over the paper,  $[r]=[r]_q$ stands for the $q$-integer
$(1-q^r)/(1-q)$ and $\Phi_n(q)$ represents the $n$-th cyclotomic
polynomial in $q$:
\begin{equation*}
\Phi_n(q)=\prod_{\substack{1\leqslant k\leqslant n\\
\gcd(k,n)=1}}(q-\zeta^k),
\end{equation*}
where $\zeta$ is an $n$-th primitive root of unity.
 In 2023, Guo and
Schlosser \cite{GS23} gave another $q$-analogue of \eqref{Hamme-a}:
for any positive integer $n\equiv 1\pmod 6$,
\begin{align*}
\sum_{k=0}^{(n-1)/3}(-1)^k[6k+1]\frac{(q;q^3)_k^3}{(q^3;q^3)_k^3}
\equiv
[n]q^{2(1-n)/3}\frac{(-q^3;q^3)_{(n-1)/3}}{(-q^2;q^3)_{(n-1)/3}}\pmod{[n]\Phi_n(q)^2},
\end{align*}
and displayed the following $q$-supercongruence: for any positive
integer $n\equiv 1\pmod 3$,
\begin{align*}
\sum_{k=0}^{n-1}[6k+1]\frac{(q;q^3)_k^3}{(q^3;q^3)_k^3} \equiv
[n]q^{2(1-n)/3}\frac{(q^3;q^3)_{(n-1)/3}}{(q^2;q^3)_{(n-1)/3}}\pmod{\Phi_n(q)^3},
\end{align*}
which is a $q$-analogue of \eqref{eq:Gw} in the $p\equiv 1\pmod 6$
case. In the same paper, they also provided the following
$q$-analogue of \eqref{eq:he} in the $p\equiv 5\pmod 6$ case: for
any positive integer $n\equiv 2\pmod 3$,
\begin{align}\label{eq:guo-a}
\sum_{k=0}^{n-1}[6k+1]\frac{(q;q^3)_k^3}{(q^3;q^3)_k^3} \equiv
[2n]q^{2(1-2n)/3}\frac{(q^3;q^3)_{(2n-1)/3}}{(q^2;q^3)_{(2n-1)/3}}\pmod{\Phi_n(q)^2}.
\end{align}
Recently, Guo and Zhu \cite{GuoZhu-a} displayed the following
generalization of \eqref{eq:guo-a}: for any positive integer
$n\equiv 2\pmod 3$,
\begin{align*}
\sum_{k=0}^{n-1}[6k+1]\frac{(q;q^3)_k^3}{(q^3;q^3)_k^3}
&\equiv
[2n]q^{2(1-2n)/3}\frac{(q^3;q^3)_{(2n-1)/3}}{(q^2;q^3)_{(2n-1)/3}}
\\[5pt]
&\quad\times\bigg\{1-4[n]^2\sum_{i=1}^{(n-2)/3}\frac{q^{3i}}{[3i]^2}
\bigg\}\pmod{\Phi_n(q)^3},
\end{align*}
which is a $q$-analogue of \eqref{eq:Gw} in the $p\equiv 5\pmod 6$
case. For more $q$-analogues of supercongruences, we refer the
reader to the papers
\cite{Guo-a2,Guo2023,GuoZhu,GuoZhu-b,Li,LW,Wei-c}.

Motivated by the works just mentioned, we shall establish the
following two  parametric $q$-supercongruences.

\begin{thm}\label{thm-a}
Let $n,s$ be nonnegative integers such that $n\equiv 2\pmod 3$ and
$s\leq(n-2)/3$. Then
\begin{align}\label{eq:wei-a}
&\sum_{k=s}^{(n+1)/3+s}[6k-1]\frac{(q^{-1};q^3)_{k-s}(q^{-1};q^3)_{k+s}(q^{-1};q^3)_{k}}
{(q^{3};q^3)_{k-s}(q^{3};q^3)_{k+s}(q^{3};q^3)_{k}}q^{3k}
\equiv0\pmod {\Phi_n(q)^3}.
\end{align}
\end{thm}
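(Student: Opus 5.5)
The plan is to use the creative microscoping method: embed the left-hand side of \eqref{eq:wei-a} into a family depending on an extra parameter $a$, prove a congruence for the family modulo $\Phi_n(q)(1-aq^n)(a-q^n)$, and then let $a\to1$.

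First I re-index by $k\mapsto k+s$, so that the sum runs over $0\le k\le (n+1)/3$. The summand then becomes
\[
[6k+6s-1]\,\frac{(q^{-1};q^3)_{k}(q^{-1};q^3)_{k+2s}(q^{-1};q^3)_{k+s}}{(q^{3};q^3)_{k}(q^{3};q^3)_{k+2s}(q^{3};q^3)_{k+s}}\,q^{3k+3s}.
\]
Writing $(x;q^3)_{k+m}=(x;q^3)_m(xq^{3m};q^3)_k$ exposes a well-poised structure in base $q^3$ with $\alpha=q^{6s-1}$:
\begin{itemize}
\item the factor $[6k+6s-1]$ is proportional to $1-\alpha q^{6k}$;
\item the parameters $q^{-1}$ and $q^{3s-1}$ pair with $\alpha q^3/q^{-1}=q^{6s+3}$ and $\alpha q^3/q^{3s-1}=q^{3s+3}$;
\item $\alpha$ itself pairs with $q^3$.
\end{itemize}
The sum is therefore a terminating very-well-poised series, the type covered by the summation formula the paper relies on (a ${}_6\phi_5$-type evaluation, possibly in a Bailey-type variant with an extra parameter).

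Next I introduce the parameter $a$ into the two free numerator parameters. Concretely, I replace $(q^{-1};q^3)_{k-s}$ and $(q^{-1};q^3)_{k+s}$ in the original sum by $(aq^{-1};q^3)_{k-s}$ and $(q^{-1}/a;q^3)_{k+s}$, with the matching denominators $(q^3/a;q^3)_{k-s}$ and $(aq^3;q^3)_{k+s}$, so that well-poisedness is preserved. I then prove that the resulting $a$-sum is $\equiv0$ modulo $(1-aq^n)(a-q^n)$.

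At $a=q^{-n}$ and at $a=q^{n}$, one of the numerator factors becomes $(q^{-1-n};q^3)$ or $(q^{-1+n};q^3)$. Since $n\equiv2\pmod3$, these terminate exactly at $k=(n+1)/3$ (in the shifted index, up to the shift $s$), which matches the range of summation. The summation formula then evaluates the sum in closed form. I expect that closed form to contain a factor such as $(q^{-1};q^3)_{(n+1)/3}$ or $(q^{2-n};q^3)_{\cdots}$ in the numerator that vanishes identically, or a $q$-integer $[n]$. Either way the specialised sum is $0$, and since $(1-aq^n)$ and $(a-q^n)$ are coprime as polynomials in $a$, the $a$-sum is divisible by their product. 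The condition $s\le (n-2)/3$ is needed here, to ensure that no denominator $(q^3;q^3)_{k+s}$ or $(aq^3;q^3)_{k+s}$ with $k+s\le (n+1)/3+2s$ produces an extra factor of $\Phi_n(q)$.

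Separately, I show that the $a$-sum is $\equiv0\pmod{\Phi_n(q)}$ for every $a$. The approach is to pair the term $k$ with the term $(n+1)/3+2s-k$ (in the original indexing, the reflection $k\mapsto (n+1)/3+2s-k$, which maps the index set $\{s,\dots,(n+1)/3+s\}$ to itself). Using $q^n\equiv1\pmod{\Phi_n(q)}$, I would check that the paired summands are negatives of each other modulo $\Phi_n(q)$; the factor $[6k-1]$ changes sign under the reflection modulo $\Phi_n(q)$.

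Combining the two steps, the $a$-sum is $\equiv0$ modulo $\Phi_n(q)(1-aq^n)(a-q^n)$. Letting $a\to1$, the modulus becomes $\Phi_n(q)(1-q^n)^2$, which contains $\Phi_n(q)^3$, and the $a$-sum reduces to the left-hand side of \eqref{eq:wei-a}. One must check that the $a$-dependent denominators $(q^3/a;q^3)_{k-s}(aq^3;q^3)_{k+s}$ are coprime to $\Phi_n(q)$ at $a=1$, which again follows from $k+s<n/3+\ldots<n$.

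The main obstacle is matching the specialised series exactly to the available summation formula and confirming that the evaluation really vanishes. In particular, the argument of the series is $q^3$ rather than the value $\alpha q^3/(bc)$ of the standard very-well-poised ${}_6\phi_5$ sum. I would first try to absorb the discrepancy by viewing the series as a limiting or degenerate case, sending an extra parameter to $\infty$ or $0$. Failing that, I would invoke the specific summation formula the paper cites and read off the vanishing factor from its closed form.
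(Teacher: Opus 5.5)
Your overall scheme (moduli $\Phi_n(q)$, $1-aq^n$, $a-q^n$, then $a\to1$) matches the paper's. However, the $a$-deformation you chose is not well-poised when $s>0$, and the vanishing steps fail for it. Translate the pairing you correctly found after the shift $k\mapsto k+s$ back to the original index. The numerator at index $k-s$ pairs with the denominator at index $k+s$. The numerator at index $k$ pairs with the denominator at index $k$. The numerator $(q^{-1};q^3)_{k+s}$ pairs with the denominator $(q^3;q^3)_{k-s}$; this last pair is $(\alpha;q^3)_k/(q^3;q^3)_k$ and must not be touched. You instead put $a^{\pm1}$ into the numerators at indices $k\pm s$ and the compensating denominators at the same indices. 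That breaks the pairing and even turns $\alpha$ into $\alpha/a$.

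Concretely, take $n=5$, $s=1$, $a=q^5$. The factor $(q^{-1}/a;q^3)_{k+1}=(q^{-6};q^3)_{k+1}$ kills $k=2,3$, and your sum is the single term
\[
[5]\,q^{3}\,\frac{(1-q^{-6})(1-q^{-3})(1-q^{-1})}{(1-q^{3})(1-q^{8})(1-q^{11})}\neq 0,
\]
so it is not divisible by $a-q^n$. Nor is it $\equiv0\pmod{\Phi_5(q)}$ for generic $a$: the terms $k=1,3$ vanish there, but the $k=2$ term does not.

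The deformation that works is the paper's:
\[
\frac{(aq^{-1};q^3)_{k-s}(q^{-1};q^3)_{k+s}(q^{-1}/a;q^3)_{k}}{(q^{3};q^3)_{k-s}(q^{3}/a;q^3)_{k+s}(aq^{3};q^3)_{k}} .
\]
With it, \eqref{Dixon} in base $q^3$ applies with $\alpha=q^{6s-1}$ and $d=q^{3s+1}$. Since $d=\alpha q^3/d$, the $d$-pair cancels and the argument $\alpha q^3/(bcd)$ is exactly $q^3$. This resolves your ``main obstacle'' without any limiting parameter. At $a=q^n$ the series terminates via $q^{3s-1}/a=q^{-1-n+3s}$, and the closed form is
\[
\frac{(q^{2-n+3s},q^{6s+2};q^3)_{(n+1)/3-s}}{(q^{3-n+6s},q^{3s+1};q^3)_{(n+1)/3-s}} .
\]
This vanishes precisely because $s\le(n-2)/3$ puts the factor $1-q^0$ inside $(q^{2-n+3s};q^3)_{(n+1)/3-s}$; that hypothesis also keeps the denominators coprime to $\Phi_n(q)$ at $a=1$, as you noted. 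At $a=q^{-n}$ the vanishing factor is $(q^{2-n};q^3)_{(n+1)/3}$.

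Your modulo-$\Phi_n(q)$ step also uses the wrong reflection. Under $k\mapsto(n+1)/3+2s-k$, the factor $[6k-1]$ goes to $[2n+1+12s-6k]\equiv[1+12s-6k]$. This is in general not of the form $-q^{j}[6k-1]$ modulo $\Phi_n(q)$. For $n=5$, $s=1$, the paired indices $k=1,3$ give $[5]\equiv0$ but $[17]\equiv1+q$. The correct reflection is $k\mapsto(n+1)/3-k$, restricted to $s\le k\le(n+1)/3-s$, where indeed $[6((n+1)/3-k)-1]\equiv-q^{1-6k}[6k-1]$. The remaining terms with $k>(n+1)/3-s$ are $\equiv0$ because the undeformed factor $(q^{-1};q^3)_{k+s}$ then contains $1-q^n$. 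This is a second reason the numerator at index $k+s$ must not carry $a$.
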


Setting $n=p$ and taking $q\to 1$ in Theorem \ref{thm-a}, we catch
hold of the following conclusion.

\begin{cor}\label{cor-a}
Let $p$ be a prime and let $s$ be a nonnegative integer subject to
$p\equiv2\pmod{3}$ and $ s\leq (p-2)/3$. Then
\[\sum_{k=s}^{(p+1)/3+s}(6k-1)\frac{(-\frac{1}{3})_{k-s}(-\frac{1}{3})_{k+s}(-\frac{1}{3})_{k}}{(k-s)!(k+s)!k!}
\equiv 0\pmod{p^3}.\]
\end{cor}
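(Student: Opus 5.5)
The statement to prove is Corollary \ref{cor-a}. It follows from Theorem \ref{thm-a} by specializing $n=p$ and letting $q\to1$; the only work is checking that the reduction is legitimate.

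First I put $n=p$ in \eqref{eq:wei-a}. Since $p$ is prime, $\Phi_p(q)=1+q+\cdots+q^{p-1}$. Write the left-hand side of \eqref{eq:wei-a} as a rational function $A(q)/B(q)$ with $B(q)$ a product of factors $1-q^{3j}$.

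Next I identify which $j$ occur in $B$. The denominator factors are $(q^3;q^3)_{k+s}$ with $k+s\le (p+1)/3+2s$. Because $s\le (p-2)/3$, this gives
\[
k+s\le \frac{p+1}{3}+\frac{2(p-2)}{3}=p-1 .
\]
So every $j$ appearing satisfies $j\le p-1$. For such $j$ we have $p\nmid 3j$ (using $p\ne3$, which holds since $p\equiv2\pmod3$), and therefore $\Phi_p(q)$ does not divide $1-q^{3j}$.

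It follows that $B(q)$ is coprime to $\Phi_p(q)$. Theorem \ref{thm-a} then says $\Phi_p(q)^3$ divides $A(q)$ in $\mathbb{Z}[q]$.

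Now let $q\to1$. We have $\Phi_p(1)=p$. The quantity $B(q)/(1-q)^{\deg}$ tends to a product of integers $3j$ with $j\le p-1$, and none of these is divisible by $p$. Each term converges as follows:
\[
\frac{(q^{-1};q^3)_m}{(q^3;q^3)_m}\to\frac{(-\frac13)_m}{m!},\qquad [6k-1]\to 6k-1,\qquad q^{3k}\to1 .
\]
The powers of $(1-q)$ cancel between numerator and denominator. Hence the sum in the corollary equals $p^3$ times a rational number whose denominator is prime to $p$. That is exactly the congruence modulo $p^3$ in $\mathbb{Z}_p$.

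The only genuine obstacle is the denominator check above. It is precisely where the hypothesis $s\le (p-2)/3$ enters, ensuring no factor $1-q^{3j}$ with $p\mid j$ appears. Everything else is the standard $q\to1$ limit.
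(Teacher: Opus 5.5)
Your proposal is correct and is the paper's own argument: put $n=p$ in Theorem \ref{thm-a} and let $q\to1$, using $\Phi_p(1)=p$. You also spell out a check the paper leaves implicit: the denominators $(q^3;q^3)_{k+s}$ with $k+s\le p-1$ are coprime to $\Phi_p(q)$ and tend, after removing the $(1-q)$-powers, to products of integers $3j$ prime to $p$, which is what makes the $q\to1$ limit a genuine congruence modulo $p^3$.
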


\begin{thm}\label{thm-b}
Let $n>1,s$ be nonnegative integers such that $n\equiv 1\pmod 3$ and
$s\leq(n-4)/6$. Then
\begin{align}\label{eq:wei-b}
&\sum_{k=s}^{(2n+1)/3+s}[6k-1]\frac{(q^{-1};q^3)_{k-s}(q^{-1};q^3)_{k+s}(q^{-1};q^3)_{k}}
{(q^{3};q^3)_{k-s}(q^{3};q^3)_{k+s}(q^{3};q^3)_{k}}q^{3k}
\equiv0\pmod {\Phi_n(q)^3}.
\end{align}
\end{thm}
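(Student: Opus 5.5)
Theorem \ref{thm-b} will be proved along the lines of Theorem \ref{thm-a}, with $(n+1)/3$ replaced by $(2n+1)/3$. The tools are a terminating summation formula for $q$-series of very-well-poised type, used as an exact identity with an extra parameter, and the creative microscoping method.

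\textbf{Shifting the summation index.} Put $k=j+s$. The summand becomes a very-well-poised term in $j$ with base $q^3$. It has numerator parameters
\[
q^{-1}, \quad q^{-1+3s}, \quad q^{-1+3s}\ (\text{one copy split off from } (q^{-1};q^3)_{k+s}),
\]
together with the factor $[6j+6s-1]$, which supplies the well-poised pair $q^{6s-1}\cdot q^{6j}$. The top limit of the sum is $j=(2n+1)/3$.

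\textbf{Parametric identity.} Introduce a parameter $a$ by replacing two of the $q^{-1}$-type parameters with $q^{-1}a$ and $q^{-1}/a$. One parameter is chosen so that it becomes $q^{-1-2n}$ when $a=q^{-2n}$; this makes the series terminate naturally at $j=(2n+1)/3$. The truncation at $(2n+1)/3$ is harmless: for $k$ in the range $(2n+1)/3+s<k\le n-1$, the factor $(q^{-1};q^3)_{k-s}$ contains $1-q^{2n}$ in the numerator, while the denominator $(q^3;q^3)_{k+s}$ is coprime to $\Phi_n(q)$ as long as $k+s<n$. This is exactly why the hypothesis $s\le (n-4)/6$ appears: it keeps $(2n+1)/3+2s<n$, so no denominator factor $1-q^{3n}$ occurs.

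\textbf{Evaluating the deformed sum.} Apply the summation formula (a terminating ${}_6\phi_5$/${}_8\phi_7$-type evaluation, as in the source formula of the paper) to the deformed sum $S(a)$. This should produce a closed product that:
\begin{itemize}
\item vanishes modulo $1-aq^{n}$ and $a-q^{n}$, since the product contains $(q^{-1}a;q^3)$- and $(q^{-1}/a;q^3)$-type factors whose index passes $(2n+1)/3$, or equivalently hits $q^{-1-2n}a^{\pm1}$;
\item vanishes modulo $\Phi_n(q)$ at $a=1$, through a factor such as $(q^{2};q^3)$ or $(q^{-1};q^3)_{m}$ containing $1-q^{2n}$.
\end{itemize}
This gives $S(a)\equiv 0 \pmod{(1-aq^n)(a-q^n)}$. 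Separately, one shows $S(a)\equiv 0\pmod{\Phi_n(q)}$ for all $a$. For that step I will try the pairing $k\leftrightarrow (2n+1)/3+2s-k$ modulo $\Phi_n(q)$, under which summands cancel using $q^{n}\equiv1$; failing that, I will use the formula again with $a$ specialised so that the result visibly carries $\Phi_n(q)$.

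\textbf{Conclusion.} The polynomials $(1-aq^n)(a-q^n)$ and $\Phi_n(q)$ are coprime in $\mathbb{Z}[a,q]$. Hence the two congruences combine to
\[
S(a)\equiv 0 \pmod{\Phi_n(q)(1-aq^n)(a-q^n)}.
\]
Setting $a=1$ makes $(1-q^n)^2$ contribute $\Phi_n(q)^2$, and we obtain the modulus $\Phi_n(q)^3$.

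\textbf{Main obstacle.} The hardest step is choosing the parametrisation so that two things hold at once. First, the denominators $(q^3a;q^3)_j$, $(q^3/a;q^3)_j$ must stay coprime to $1-aq^n$ and $a-q^n$ over the whole range of $j$. Second, the closed form must really vanish at $a=q^{\pm n}$. I expect this to force the particular index window $(2n+1)/3$ together with the condition $6s\le n-4$, which guarantees no $\Phi_n(q)$ enters the denominators and no spurious factor $1-q^{n}$ cancels the numerator vanishing.
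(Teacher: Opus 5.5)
Your outline follows the paper's architecture: deform by $a$, remove two linear factors in $a$ with Dixon's formula, get one $\Phi_n(q)$ separately, and use $s\le(n-4)/6$ to keep $(q^3;q^3)_{k+s}$ prime to $\Phi_n(q)$ at $a=1$. However, two of your three congruences are wrong or unproved, and the second is a genuine gap.

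\textbf{The roots are not $a=q^{\pm n}$.} For $n\equiv1\pmod 3$ and $a=q^{n}$, the factor $(q^{-1}/a;q^3)_k=(q^{-1-n};q^3)_k$ never vanishes because $3\nmid n+1$, so nothing terminates and the sum is not zero. For $n=4$, $s=0$, $a=q^4$ it collapses to $\sum_{k=0}^{3}[6k-1]\frac{(q^{-5};q^3)_k}{(q^7;q^3)_k}q^{3k}$, whose value at $q=1$ is $-256/91$. Also, $S(a)$ is not a terminating series for generic $a$, so Dixon's formula cannot give ``a closed product'' for it; it applies only after $a$ is specialised. The correct roots are $a=q^{2n}$ and $a=q^{-2n}$, as your own remark about $q^{-1-2n}$ suggests. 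After $k\mapsto k+s$, the two Dixon evaluations contain $(q^{2-2n+3s};q^3)_{(2n+1)/3-s}$ (this uses $s\le(2n-2)/3$) and $(q^{2-2n};q^3)_{(2n+1)/3}$, and each has last factor $1-q^0$. At $a=1$, $(a-q^{2n})(1-aq^{2n})=(1-q^{2n})^2$ still supplies $\Phi_n(q)^2$, so this part is repairable.

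\textbf{The congruence $S(a)\equiv0\pmod{\Phi_n(q)}$ for generic $a$ is not established.} Your pairing $k\leftrightarrow(2n+1)/3+2s-k$ fails for $s\ge1$. The terms with $(2n+1)/3-s<k\le(2n+1)/3+s$ are $\equiv0$, because $(q^{-1};q^3)_{k+s}$ contains $1-q^{2n}$. Your pairing matches them with terms $s\le k<3s$, such as $t_s$, which are not $\equiv0$.

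The natural reflection $k\leftrightarrow(2n+1)/3-k$ does not cancel either. Modulo $\Phi_n(q)$, the very-well-poised parameter of the shifted sum is $q^{6s-1}\equiv q^{-1-2n+6s}=q^{-3N}$, where $N=(2n+1)/3-2s$ is always odd. Unlike in Lemma \ref{lem-a}, no sign from $q^{n/2}\equiv-1$ is available, so the terms are \emph{symmetric}: $t_{(2n+1)/3-k}\equiv+t_k$. Already for $n=4$, $s=0$, at $q=i$ every $q$-shifted factorial cancels (since $q^{-1}=q^3$), leaving $t_k=[6k-1]q^{3k}$ with $t_0=t_3=i$ and $t_1=t_2=-i$. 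The total vanishes only because the half-sum $t_0+t_1$ does, which is an evaluation, not a cancellation. Your fallback of specialising $a$ cannot give a congruence valid for all $a$.

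The missing idea, which is the paper's Lemma \ref{lem-d}, is to keep $a$ free and apply $q^n\equiv1$ to the other parameters. Replace $(q^{-1};q^3)_{k+s}$, $[6k-1]$, $(q^3/a;q^3)_{k+s}$, $(aq^3;q^3)_k$ and $q^{3k}$ by $(q^{-1-2n};q^3)_{k+s}$, $[6k-1-2n]$, $(q^{3-2n}/a;q^3)_{k+s}$, $(aq^{3-2n};q^3)_k$ and $q^{(3-n)k}$ respectively. This gives a terminating very-well-poised series with $A=q^{-1-2n+6s}$. A limiting case of Dixon's formula evaluates it, up to an explicit prefactor, as $2(q^{4-2n+3s},q^{2-2n+6s};q^3)_{(n-1)/3-s}/(aq^{3-2n+3s},q^{3-2n+6s}/a;q^3)_{(n-1)/3-s}$. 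The paper writes this as $=0$, but it is not literally zero. What is true, and sufficient, is that the last factor of $(q^{4-2n+3s};q^3)_{(n-1)/3-s}$ equals $1-q^{-n}$ whenever $s\le(n-4)/3$.
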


Fixing $n=p$ and taking $q\to 1$ in Theorem \ref{thm-b}, we grasp
hold of the following conclusion.

\begin{cor}\label{cor-b}
Let $p$ be a prime and let $s$ be a nonnegative integer subject to
$p\equiv1\pmod{3}$ and $ s\leq (p-4)/6$. Then
\[\sum_{k=s}^{(2p+1)/3+s}(6k-1)\frac{(-\frac{1}{3})_{k-s}(-\frac{1}{3})_{k+s}(-\frac{1}{3})_{k}}{(k-s)!(k+s)!k!}
\equiv 0\pmod{p^3}.\]
\end{cor}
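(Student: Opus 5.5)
The plan is to specialize Theorem~\ref{thm-b} at $n=p$ and pass to the limit $q\to1$. The main care is needed in turning a congruence modulo $\Phi_p(q)^3$ between rational functions into a congruence modulo $p^3$ between rational numbers. Since $p\equiv1\pmod 3$ is prime, we have $p\ge7$, so $n=p>1$ satisfies the hypotheses of Theorem~\ref{thm-b} whenever $s\le (p-4)/6$.

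\emph{Step 1: clearing denominators.} Write the left-hand side of \eqref{eq:wei-b} with $n=p$ as $A(q)/B(q)$. Here $B(q)$ is a product of cyclotomic polynomials coming from the factors $1-q^{3j}$ in $(q^3;q^3)_{k-s}(q^3;q^3)_{k+s}(q^3;q^3)_{k}$, and $A(q)\in\mathbb{Z}[q,q^{-1}]$. The largest index that occurs is
\[
k+s\le \frac{2p+1}{3}+2s\le \frac{2p+1}{3}+\frac{p-4}{3}=p-1,
\]
so only $1\le j\le p-1$ appear. Thus $3j\le 3p-3$, and $p\nmid 3j$. Every cyclotomic factor $\Phi_d(q)$ of $B(q)$ therefore has $d\mid 3j$ with $p\nmid d$. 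In particular $\Phi_p(q)\nmid B(q)$, and no $d$ is a power of $p$ (note $p^2>3p-3$). Theorem~\ref{thm-b} then says $\Phi_p(q)^3\mid A(q)$ in $\mathbb{Q}[q,q^{-1}]$. Since $\Phi_p(q)^3$ is monic with integer coefficients, division gives $A(q)=\Phi_p(q)^3C(q)$ with $C(q)\in\mathbb{Z}[q,q^{-1}]$.

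\emph{Step 2: evaluating at $q=1$.} We have $\Phi_p(1)=p$, so $A(1)=p^3C(1)$ with $C(1)\in\mathbb{Z}$. Also $\Phi_d(1)$ equals $\ell$ if $d$ is a power of a prime $\ell$, equals $0$ only for $d=1$, and equals $1$ otherwise. The $d=1$ factors cancel the $(1-q)$ factors coming from the numerator, so after this cancellation we work with $[6k-1]$ and $q$-integer ratios directly. Concretely, each term is
\[
[6k-1]\prod \frac{1-q^{3i-1}}{1-q^{3i+3}}\,q^{3k},
\]
and after cancelling the powers of $1-q$ its value at $q=1$ is $(6k-1)\prod (3i-1)/(3i+3)$. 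This equals the corresponding summand of Corollary~\ref{cor-b}, because
\[
\frac{(-\frac13)_m}{m!}=\prod_{i=0}^{m-1}\frac{3i-1}{3i+3}.
\]
The remaining denominator at $q=1$ is a product of values $\Phi_d(1)$ with $d$ not a power of $p$. Equivalently, it is a product of the integers $3j$ with $j\le p-1$. Hence it is a $p$-adic unit.

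\emph{Step 3: conclusion.} Dividing $p^3C(1)$ by a $p$-adic unit yields that the sum in Corollary~\ref{cor-b} is $\equiv0\pmod{p^3}$ in $\mathbb{Z}_p$. The only delicate point is Step~1, namely the bound $k+s\le p-1$. This bound is exactly where the hypothesis $s\le (p-4)/6$ is used, and it guarantees that no denominator picks up a factor $\Phi_p(q)$ or a factor of $p$ at $q=1$.
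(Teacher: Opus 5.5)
Your proposal is correct and follows the paper's route exactly: specialize Theorem~\ref{thm-b} at $n=p$ and let $q\to1$. You also supply the details the paper leaves implicit, notably the bound $k+s\le p-1$, which keeps $\Phi_p(q)$ out of the denominators and makes the $q=1$ denominator a $p$-adic unit. One cosmetic fix: since $B(1)=0$, the identity $A(1)=p^3C(1)$ is just $0=0$, so take the common denominator to be a product of $q$-integers $[3j]$ with $1\le j\le p-1$ from the outset before evaluating at $q=1$ (equivalently, divide $A$ and $C$ by the same power of $1-q$, which is legitimate because $\gcd(\Phi_p(q),1-q)=1$).
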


 Following Gasper and Rahman \cite{Gasper}, define the $q$-series (basis hypergeometric series)
 to be
$$
_{r+1}\phi_{r}\left[\begin{array}{c}
a_1,a_2,\ldots,a_{r+1}\\
b_1,b_2,\ldots,b_{r}
\end{array};q,\, z
\right] =\sum_{k=0}^{\infty}\frac{(a_1,a_2,\ldots, a_{r+1};q)_k}
{(q,b_1,b_2,\ldots,b_{r};q)_k}z^k.
$$
Then a summation formula for $q$-series (cf. \cite[Appendix
(II.20)]{Gasper}) can be stated as
\begin{align}
_{6}\phi_{5}\!\left[\begin{array}{c}
a,\,qa^{\frac{1}{2}},\, -qa^{\frac{1}{2}},\, b,\, c,\, d \\
a^{\frac{1}{2}},\,-a^{\frac{1}{2}},\, aq/b,\, aq/c,\, aq/d
\end{array};q,\,\frac{aq}{bcd}  \right]
=\frac{(aq,aq/bc,aq/bd,aq/cd;q)_{\infty}}{(aq/b,aq/c,aq/d,aq/bcd;q)_{\infty}},
\label{Dixon}
\end{align}
where $|aq/bcd|<1$.

The rest of the paper is arranged as follows. In terms of the
summation formula for $q$-series \eqref{Dixon} and the creative
microscoping method introduced in \cite{GuoZu}, we are going to
prove Theorem \ref{thm-a} in Section 2. Similarly, the proof of
Theorem \ref{thm-b} will be displayed in Section 3.

%%%%%%%%%%%%%%%%%%%%%%%%%%%%%%%%%%%%%%%%%%%%%%%%%%%%%%%%%%%%%%%%%%%%%%%%%%%%%%%%%%%%%%%%%%%%%%%%%%%%%%%%%%%%%%%%%%%%%%%%%%%%%%%%%%%%%%%%%%%%%%%%%%%%%%%%%%%%%%%%%%%%%%%%%%%%%%%%%%%%%%%%%%%%%%%%%%%%%%%%%%%%%%%%%%%
\section{Proof of Theorem \ref{thm-a}}
%%%%%%%%%%%%%%%%%%%%%%%%%%%%%%%%%%%%%%%%%%%%%%%%%%%%%%%%%%%%%%%%%%%%%%%%%%%%%%%%%%%%%%%%%%%%%%%%%%%%%%%%%%%%%%%%%

In order to prove Theorem \ref{thm-a}, we need the following three
lemmas.

\begin{lem}\label{lem-a}
Let $n,s$ be nonnegative integers such that $n\equiv 2\pmod 3$. Then
\begin{align}\label{eq:wei-aa}
&\sum_{k=s}^{(n+1)/3+s}[6k-1]\frac{(aq^{-1};q^3)_{k-s}(q^{-1};q^3)_{k+s}(q^{-1}/a;q^3)_{k}}
{(q^{3};q^3)_{k-s}(q^{3}/a;q^3)_{k+s}(aq^{3};q^3)_{k}}q^{3k}
\equiv0\pmod {\Phi_n(q)}.
\end{align}
\end{lem}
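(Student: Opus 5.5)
The plan is to evaluate the left-hand side of \eqref{eq:wei-aa} at a primitive $n$-th root of unity $q=\zeta$ and show that it vanishes there. For this I recognize the sum, after a shift of index, as a truncated well-poised ${}_6\phi_5$ series in base $q^3$, and then apply the terminating form of \eqref{Dixon}.

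\textbf{Step 1: rewrite as a ${}_6\phi_5$.} Put $k=j+s$ and use $(x;q^3)_{j+m}=(x;q^3)_m(xq^{3m};q^3)_j$. Up to a prefactor independent of $j$, which is harmless modulo $\Phi_n(q)$ as long as its denominator is coprime to $\Phi_n(q)$, the summand becomes
\[
\frac{1-Aq^{6j}}{1-A}\,
\frac{(A,\,aq^{-1},\,q^{3s-1}/a,\,d;q^3)_j}{(q^3,\,Aq^3/(aq^{-1}),\,aq^{3s+3},\,Aq^3/d;q^3)_j}\,q^{3j},
\]
with $A=q^{6s-1}$ and $d=q^{3s+1}$. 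Here the $d$-factors cancel because $d=Aq^3/d$. Indeed:
\begin{itemize}
\item $[6k-1]=[6j+6s-1]$ supplies $1-Aq^{6j}$;
\item $(q^{-1};q^3)_{j+2s}$ supplies $(A;q^3)_j$;
\item the pairs $aq^{-1}\leftrightarrow q^{6s+3}/a$ and $q^{3s-1}/a\leftrightarrow aq^{3s+3}$ both have product $Aq^3=q^{6s+2}$, so the series is well-poised;
\item the argument is $Aq^3/(bcd)=q^{6s+2}/(q^{3s-2}q^{3s+1})=q^3$.
\end{itemize}
So the sum is exactly a section of the series on the left of \eqref{Dixon} with $q\mapsto q^3$, $a\mapsto A$, $b=aq^{-1}$, $c=q^{3s-1}/a$, $d=q^{3s+1}$.

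\textbf{Step 2: truncation modulo $\Phi_n(q)$.} Since $n\equiv 2\pmod 3$, the number $m=(n+1-6s)/3$ is an integer, and $Aq^{3m}=q^{n}\equiv 1\pmod{\Phi_n(q)}$. Hence $(A;q^3)_j\equiv 0$ for $j>m$. The denominators $(q^3;q^3)_j$ with $j\le (n+1)/3$ contain no factor $1-q^{3i}$ with $n\mid 3i$, so they are coprime to $\Phi_n(q)$. Generically in $a$ the other denominators are coprime to $\Phi_n(q)$ as well. Therefore, modulo $\Phi_n(q)$, the sum equals the same series truncated at $j=m$, with $A$ replaced by $q^{-3m}$, since $q^{6s-1}=q^{n-3m}\equiv q^{-3m}$. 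After this replacement the series terminates naturally at $j=m$.

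\textbf{Step 3: evaluate.} The terminating version of \eqref{Dixon} with $a=q^{-3m}$ gives the value
\[
\frac{(Aq^3,\,Aq^3/bc,\,Aq^3/bd,\,Aq^3/cd;q^3)_m}{(Aq^3/b,\,Aq^3/c,\,Aq^3/d,\,Aq^3/bcd;q^3)_m}.
\]
Its numerator contains $(q^{3-3m};q^3)_m$, which has the factor $1-q^0=0$ whenever $m\ge1$. This yields $\equiv0\pmod{\Phi_n(q)}$. Some care is needed here: after the substitution $A\to q^{-3m}$ the parameters $b,c,d$ should be chosen consistently with well-poisedness (e.g.\ $d\equiv q^{3s+1}$ and $d^2\equiv Aq^3$ modulo $\Phi_n$), so that the congruent series is literally a terminating instance of \eqref{Dixon}. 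The cleanest way to arrange this is to work at $q=\zeta$, where all these congruences become equalities.

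\textbf{Main obstacle.} I expect the main obstacle to be the boundary and degenerate cases, namely $m\le 0$ (large $s$) and possible vanishing of denominators or of the prefactor $(q^{-1};q^3)_{2s}(q^{-1}/a;q^3)_s/((q^3/a;q^3)_{2s}(aq^3;q^3)_s)$ at $\zeta$. There are two cases.
\begin{itemize}
\item If $m\le0$, then $2s\ge (n+1)/3$. I would then argue directly that $(q^{-1};q^3)_{j+2s}$ already contains the factor $1-q^{n}$ for every $j\ge0$, so every term vanishes modulo $\Phi_n(q)$ while the remaining denominators stay coprime to $\Phi_n(q)$ since $a$ is generic.
\item If the value of the terminating sum is not visibly zero, I would instead check the denominator factor $(Aq^3/bcd;q^3)_m$ against the numerator. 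I would then use the $a\leftrightarrow$ symmetry of the summand to confirm that the numerator zero is not cancelled.
\end{itemize}
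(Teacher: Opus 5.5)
Your proof is correct in substance, but it takes a different route from the paper.

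\textbf{How the two routes differ.} The paper does not use \eqref{Dixon} for this lemma. For $s\le (n+1)/6$ it pairs the term $k$ with the term $(n+1)/3-k$. Using Guo--Schlosser's congruence for $(aq^{-1};q^3)_{m-k}/(q^3/a;q^3)_{m-k}$, it shows the two are negatives of each other modulo $\Phi_n(q)$. The terms with $k>(n+1)/3-s$ vanish because $(q^{-1};q^3)_{k+s}$ contains $1-q^n$, and for $s>(n+1)/6$ every term contains that factor.

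You instead write the shifted sum as the very-well-poised ${}_6\phi_5$ with parameters $q^{6s-1}, aq^{-1}, q^{3s-1}/a, q^{3s+1}$. This is the same choice the paper makes, with $a=q^{\pm n}$, in Lemmas~2.2 and~2.3. You then observe that at $q=\zeta$ the first parameter is exactly $(\zeta^3)^{-m}$, and read off the zero from $(Aq^3;q^3)_m$.

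The two arguments are closer than they look. A terminating very-well-poised series with first parameter $Q^{-m}$ vanishes precisely because its terms satisfy $t_{m-j}=-t_j$, and your reflection $j\mapsto m-j$ is the paper's $k\mapsto (n+1)/3-k$. Likewise, your truncation at $j=m$ is the paper's tail argument, since $(q^{-1};q^3)_{j+2s}=(q^{-1};q^3)_{2s}(q^{6s-1};q^3)_j$.

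\textbf{What each route buys.} Your version is uniform with the rest of the paper and dispenses with the external lemma. The cost is justifying \eqref{Dixon} at a root of unity, which you can do as you hint:
\begin{itemize}
\item For fixed $m\ge1$, the terminating sum with first parameter $Q^{-m}$ and argument $Q^{1-m}/(bcd)$ is identically zero as a rational function of $(Q,b,c,d)$. You can see this by letting $a\to Q^{-m}$ in \eqref{Dixon}, whose right side contains $(Q^{1-m};Q)_\infty=0$, or directly from $t_{m-j}=-t_j$.
\item You may then specialize to $Q=\zeta^3$, $b=a\zeta^{-1}$, $c=\zeta^{3s-1}/a$, $d=\zeta^{3s+1}$, because nothing in the term denominators vanishes for $j\le m$:
\begin{itemize}
\item $n\nmid 3i$ for $1\le i\le m$;
\item the exponent $6s-1\in[-1,n-3]$ is nonzero;
\item the exponents in $(\zeta^{3s+1};\zeta^3)_j$ lie in $[3s+1,\,n-3s-1]$.
\end{itemize}
\end{itemize}
Since $Aq^3/(bcd)=q^3$ and $(\zeta^3;\zeta^3)_m\neq0$, the evaluation's denominator is nonzero. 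So the second contingency in your last paragraph never arises.

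\textbf{A small slip in the boundary case.} You assert that for $m\le0$ the factor $(q^{-1};q^3)_{j+2s}$ contains $1-q^n$ for every $j\ge0$. This holds for $m<0$. For $m=0$ (i.e.\ $6s=n+1$) and $j=0$, however, the product $(q^{-1};q^3)_{(n+1)/3}$ only reaches exponent $n-3$. That term still vanishes, because its factor $[6s-1]$ equals $[n]$. This is also exactly the case where your splitting $[6s-1]\,(1-Aq^{6j})/(1-A)$ breaks down, since $A=q^n$. So you were right to treat it separately, but the reason it vanishes should be corrected.
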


\begin{proof}
Firstly, we shall prove that \eqref{eq:wei-aa} is true for
$s\leq(n+1)/6$. Recall the $q$-congruence from Guo and Schlosser
\cite[Lemma 4]{GS3} : for $0\leqslant k\leqslant m$,
\begin{equation*}
\frac{(aq^{-1};q^d)_{m-k}}{(q^d/a;q^d)_{m-k}} \equiv
(-a)^{m-2k}\frac{(aq^{-1};q^d)_k}{(q^d/a;q^d)_k}
q^{m(dm-d-2)/2+(d+1)k} \pmod{\Phi_n(q)},
\end{equation*}
where $d,m,n$ are positive integers with $m\leq n-1$ and
$dm\equiv1\pmod{n}$.
 The $d=3,m=(n+1)/3$ case of it reads
\begin{align}
&\frac{(aq^{-1};q^3)_{(n+1)/3-k}}{(q^3/a;q^3)_{(n+1)/3-k}} \equiv
(-a)^{(n+1)/3-2k}\frac{(aq^{-1};q^3)_k}{(q^3/a;q^3)_k}
q^{(n+1)(n-4)/6+4k} \pmod{\Phi_n(q)}.
 \label{eq:wei-bb}
\end{align}
Then we can proceed as follows: for $s\leqslant k\leqslant
(n+1)/3-s$,
\begin{align}
&\frac{(aq^{-1};q^3)_{(n+1)/3-k+s}}{(q^{3}/a;q^3)_{(n+1)/3-k-s}}
  \notag\\[1mm]
&\quad=\frac{(aq^{-1};q^3)_{(n+1)/3-k-s}}{(q^{3}/a;q^3)_{(n+1)/3-k-s}}
\notag\\[1mm]
&\qquad\times(1-aq^{n-3k-3s})(1-aq^{n-3k-3s+3})\cdots
(1-aq^{n-3k+3s-3})
 \notag\\[1mm]
&\quad\equiv
(-a)^{(n+1)/3-2k-2s}\frac{(aq^{-1};q^3)_{k+s}}{(q^3/a;q^3)_{k+s}}
q^{(n+1)(n-4)/6+4k+4s}
\notag\\[1mm]
&\qquad\times(1-aq^{-3k-3s})(1-aq^{-3k-3s+3})\cdots
(1-aq^{-3k+3s-3})
 \notag\\[1mm]
&\quad=(-a)^{(n+1)/3-2k}\frac{(aq^{-1};q^3)_{k+s}}{(q^3/a;q^3)_{k-s}}q^{(n+1)(n-4)/6+s+4k-6ks}
\pmod{\Phi_n(q)},   \label{eq:wei-cc}
\end{align}
where we have used $q^n\equiv1\pmod{\Phi_n(q)}$. Similarly, we find
also the following relation: for $s\leqslant k\leqslant (n+1)/3-s$,
\begin{align}
&\frac{(aq^{-1};q^3)_{(n+1)/3-k-s}}{(q^{3}/a;q^3)_{(n+1)/4-k+s}}
  \notag\\
&\quad\equiv
(-a)^{(n+1)/3-2k}\frac{(aq^{-1};q^3)_{k-s}}{(q^3/a;q^3)_{k+s}}q^{(n+1)(n-4)/6-s+4k+6ks}
\pmod{\Phi_n(q)}.    \label{eq:wei-dd}
\end{align}
Through \eqref{eq:wei-bb}-\eqref{eq:wei-dd}, there holds the coming
$q$-congruence: for $M=(n+1)/3$ and $s\leqslant k\leqslant M-s$,
\begin{align*}
&[6(M-k)-1]\frac{(aq^{-1};q^3)_{M-k-s}(q^{-1};q^3)_{M-k+s}(q^{-1}/a;q^3)_{M-k}}
{(q^3;q^3)_{M-k-s}(q^3/a;q^4)_{M-k+s}(aq^3;q^3)_{M-k}} q^{3(M-k)}
\\&\quad
\equiv
-[6k-1]\frac{(aq^{-1};q^3)_{k-s}(q^{-1};q^3)_{k+s}(q^{-1}/a;q^3)_{k}}
{(q^{3};q^3)_{k-s}(q^{3}/a;q^3)_{k+s}(aq^{3};q^3)_{k}}q^{3k}
\pmod{\Phi_n(q)},
\end{align*}
where we have utilized $q^{n/2}\equiv-1\pmod{\Phi_n(q)}$ for any
positive even integer $n$. The last formula indicates that
\begin{align*}
&\sum_{k=s}^{(n+1)/3-s}[6k-1]\frac{(aq^{-1};q^3)_{k-s}(q^{-1};q^3)_{k+s}(q^{-1}/a;q^3)_{k}}
{(q^{3};q^3)_{k-s}(q^{3}/a;q^3)_{k+s}(aq^{3};q^3)_{k}}q^{3k}
\equiv0\pmod {\Phi_n(q)}.
\end{align*}
When $(n+1)/3-s<k\leq (n+1)/3+s$, it is easy to see that the factor
$(1-q^{n})$ appears in the $q$-shifted factorial
$(q^{-1};q^3)_{k+s}$ and so \eqref{eq:wei-aa} is true for
$s\leq(n+1)/6$.

Secondly, we shall prove that \eqref{eq:wei-aa} is true for
$s>(n+1)/6$. To achieve this goal, it is sufficient to prove the
following equation:
\begin{align}\label{eq:wei-ee}
&\sum_{k=s}^{(n+1)/3+s}[6k-1]\frac{(aq^{-1};q^3)_{k-s}(q^{-1-n};q^3)_{k+s}(q^{-1}/a;q^3)_{k}}
{(q^{3};q^3)_{k-s}(q^{3}/a;q^3)_{k+s}(aq^{3};q^3)_{k}}q^{3k} =0.
\end{align}
Via some evaluation, we discover that the expression on the
left-hand side of \eqref{eq:wei-ee} equals
\begin{align*}
&\sum_{k=s}^{(n+1)/3+s}[6k-1]\frac{(aq^{-1};q^3)_{k-s}(q^{-1-n};q^3)_{k+s}(q^{-1}/a;q^3)_{k}}
{(q^{3};q^3)_{k-s}(q^{3}/a;q^3)_{k+s}(aq^{3};q^3)_{k}}q^{3k}
\\&\quad=
\sum_{k=0}^{(n+1)/3}[6k+6s-1]\frac{(aq^{-1};q^3)_{k}(q^{-1-n};q^3)_{k+2s}(q^{-1}/a;q^3)_{k+s}}
{(q^{3};q^3)_{k}(q^{3}/a;q^3)_{k+2s}(aq^{3};q^3)_{k+s}}q^{3k+3s}
\\&\quad=
\sum_{k=0}^{(n+1)/3-2s}[6k+6s-1]\frac{(aq^{-1};q^3)_{k}(q^{-1-n};q^3)_{k+2s}(q^{-1}/a;q^3)_{k+s}}
{(q^{3};q^3)_{k}(q^{3}/a;q^3)_{k+2s}(aq^{3};q^3)_{k+s}}q^{3k+3s}
\\&\quad=0,
\end{align*}
where we have employed the fact $\sum_{k=0}^{-m}=0$ for any positive
integer $m$. Thus \eqref{eq:wei-aa} is correct for $s>(n+1)/6$.
\end{proof}

\begin{lem}\label{lem-b}
Let $n,s$ be nonnegative integers such that $n\equiv 2\pmod 3$ and
$s\leq (n-2)/3$. Then modulo $a-q^n$,
\begin{align}\label{eq:wei-ff}
&\sum_{k=s}^{(n+1)/3+s}[6k-1]\frac{(aq^{-1};q^3)_{k-s}(q^{-1};q^3)_{k+s}(q^{-1}/a;q^3)_{k}}
{(q^{3};q^3)_{k-s}(q^{3}/a;q^3)_{k+s}(aq^{3};q^3)_{k}}q^{3k}
\equiv0.
\end{align}
\end{lem}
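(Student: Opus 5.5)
Modulo $a-q^n$ we may set $a=q^n$ and must show that the resulting sum vanishes identically as a rational function of $q$ (its denominators are coprime to $a-q^n$ as long as they do not vanish at $a=q^n$). After substituting $a=q^n$, the factor $(q^{-1}/a;q^3)_k=(q^{-1-n};q^3)_k$ appears. Since $n\equiv2\pmod3$, we have $1+n=3\cdot\frac{n+1}{3}$, so this factor vanishes for $k>(n+1)/3$. The sum is therefore effectively terminating at $k=(n+1)/3$. Shifting $k\to k+s$, the sum becomes
\[
q^{3s}\sum_{k\ge0}[6k+6s-1]\frac{(q^{n-1};q^3)_k(q^{-1};q^3)_{k+2s}(q^{-1-n};q^3)_{k+s}}{(q^3;q^3)_k(q^{3-n};q^3)_{k+2s}(q^{n+3};q^3)_{k+s}}q^{3k}.
\]
The plan is to recognise this as a very-well-poised terminating ${}_6\phi_5$ in base $q^3$ and evaluate it with \eqref{Dixon}. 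Pulling out the $k=0$ values $(q^{-1};q^3)_{2s}$, $(q^{-1-n};q^3)_s$, etc., we get $(q^{-1};q^3)_{k+2s}=(q^{-1};q^3)_{2s}(q^{6s-1};q^3)_k$, and similarly for the other factors. Taking $A=q^{6s-2}$ (so that $A^{1/2}=q^{3s-1}$, and $[6k+6s-1]$ is proportional to $(1-Aq^{6k})/(1-A)$, i.e.\ $(qA^{1/2},-qA^{1/2};q^3)_k/(A^{1/2},-A^{1/2};q^3)_k$ in base $q^3$), we choose
\[
b=q^{n-1},\qquad c=q^{3s-1-n},\qquad d=q^{6s-1}/A\cdot\text{(adjust)}.
\]
We then check that the denominators match $Aq^3/b$, $Aq^3/c$, $Aq^3/d$. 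For instance, $Aq^3/b=q^{6s+2-n}=q^{3-n}q^{6s-1}$ matches $(q^{3-n};q^3)_{k+2s}$ after shifting, and $Aq^3/c=q^{3s+n+2}$ matches $(q^{n+3};q^3)_{k+s}$. The remaining numerator $(q^{6s-1};q^3)_k$ must pair with $(q^3;q^3)_k$, i.e.\ $d=Aq^3/q^3=A$? This looks off. The delicate bookkeeping is to identify correctly which three parameters are $b,c,d$; the $(q^3;q^3)_k$ in the denominator is the "$q$" slot. The numerators are then $q^{n-1}$, $q^{6s-1}$ and $q^{3s-1-n}$, and a consistent choice is $A=q^{6s-2}$ with $b=q^{n-1}$, $c=q^{3s-1-n}$, and $d$ being the parameter with $Aq^3/d=q^{3s+2}$... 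I would fix this by direct matching and verify that the argument equals $Aq^3/(bcd)=q^3$, which is consistent with the factor $q^{3k}$.

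Once the matching is fixed, \eqref{Dixon} (valid for terminating series, since $c=q^{-3m}$ with $m=(n+1)/3-s$ makes it a polynomial identity) gives the product
\[
\frac{(Aq^3,Aq^3/bc,Aq^3/bd,Aq^3/cd;q^3)_\infty}{(Aq^3/b,Aq^3/c,Aq^3/d,Aq^3/bcd;q^3)_\infty}.
\]
In the terminating case this becomes a ratio of finite products of the form $(Aq^3,Aq^3/bd;q^3)_m/(Aq^3/b,Aq^3/d;q^3)_m$. We then show that one numerator factor vanishes. Concretely, I expect $Aq^3/bd$ to be $q^{-3j}$ for some $0\le j<m$, where the constraint $s\le(n-2)/3$ is exactly what places the zero within range. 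We must also check that no denominator factor vanishes, including $(q^{3-n};q^3)_{k+2s}$, which could contain $1-q^0$ if $k+2s>(n-3)/3$. This check is the main obstacle: the exponents must be tracked carefully so that a zero in the numerator genuinely survives cancellation against any zero in the denominator. If necessary, I would treat the problematic denominators by keeping $a$ generic near $q^n$, i.e.\ evaluating with $a$ and letting $a\to q^n$.
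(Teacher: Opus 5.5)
Your overall route is the paper's: set $a=q^n$, shift $k\to k+s$, use $(q^{-1-n};q^3)_{k+s}$ to terminate the sum at $k=m:=(n+1)/3-s$, and evaluate it with \eqref{Dixon} in base $q^3$. The gap is that the decisive step is never carried out, and the bookkeeping you do write is wrong. That step is the actual specialization of \eqref{Dixon} and the exhibition of a vanishing factor. With $A=q^{6s-2}$ you get $1-Aq^{6k}=1-q^{6k+6s-2}$, which is not proportional to $[6k+6s-1]$. Each of your ``matches'' is also off by a factor of $q$. Since $(q^{3-n};q^3)_{k+2s}=(q^{3-n};q^3)_{2s}(q^{6s+3-n};q^3)_k$ and $(q^{n+3};q^3)_{k+s}=(q^{n+3};q^3)_s(q^{3s+n+3};q^3)_k$, the shifted denominator parameters are $q^{6s+3-n}$ and $q^{3s+n+3}$, not $q^{6s+2-n}$ and $q^{3s+n+2}$.

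The missing observation has two parts. First, the leftover numerator $(q^{6s-1};q^3)_k$ is the well-poised parameter itself, $A=q^{6s-1}$, and its partner is $(q^3;q^3)_k$. Second, $d$ does not appear in the summand because it is self-paired. With $b=q^{n-1}$ and $c=q^{3s-1-n}=q^{-3m}$, the condition $Aq^3/(bcd)=q^3$ forces $d=q^{3s+1}$. Then $Aq^3/d=q^{3s+1}=d$, so $(d;q^3)_k/(Aq^3/d;q^3)_k=1$. This is exactly the paper's substitution.

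With these values, the terminating evaluation you wrote down becomes
\[
\frac{(q^{6s+2},\,q^{3s+2-n};q^3)_m}{(q^{6s+3-n},\,q^{3s+1};q^3)_m}.
\]
Here $q^{3s+2-n}=q^{-3j}$ with $j=(n-2)/3-s=m-1$. The hypothesis $s\le (n-2)/3$ is precisely $j\ge 0$, so the factor $1-q^{3s+2-n+3j}=0$ occurs in the numerator and the sum vanishes. Your ``expectation'' was therefore right, but it is only verifiable with the correct parameters.

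What you call the main obstacle is not one. Because $n\equiv 2\pmod{3}$, every exponent of the form $3-n+3i$ or $6s+3-n+3i$ is $\equiv 1\pmod{3}$. So neither $(q^{3-n};q^3)_{k+2s}$ in the summand nor $(q^{6s+3-n};q^3)_m$ in the product can contain $1-q^0$. The remaining denominators $(q^{n+3};q^3)_{k+s}$ and $(q^{3s+1};q^3)_m$ have only positive exponents, so no limiting argument in $a$ is needed. With these corrections your argument coincides with the paper's proof.
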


\begin{proof}
The $a=q^n$ case of the expression on the left-hand side of
\eqref{eq:wei-ff} is equal to
\begin{align}\label{eq:wei-gg}
&\sum_{k=s}^{(n+1)/3+s}[6k-1]\frac{(q^{-1+n};q^3)_{k-s}(q^{-1};q^3)_{k+s}(q^{-1-n};q^3)_{k}}
{(q^{3};q^3)_{k-s}(q^{3-n};q^3)_{k+s}(q^{3+n};q^3)_{k}}q^{3k}
\notag\\&\quad=
\sum_{k=0}^{(n+1)/3}[6k+6s-1]\frac{(q^{-1+n};q^3)_{k}(q^{-1};q^3)_{k+2s}(q^{-1-n};q^3)_{k+s}}
{(q^{3};q^3)_{k}(q^{3-n};q^3)_{k+2s}(q^{3+n};q^3)_{k+s}}q^{3k+3s}
\notag\\&\quad=
 [6s-1]\frac{(q^{-1};q^3)_{2s}(q^{-1-n};q^3)_{s}}
{(q^{3-n};q^3)_{2s}(q^{3+n};q^3)_{s}}q^{3s}
 \notag\\&\qquad\times
\sum_{k=0}^{(n+1)/3-s}\frac{1-q^{6s-1+6k}}{1-q^{6s-1}}
\frac{(q^{6s-1},q^{-1+n},q^{-1-n+3s};q^3)_{k}}
{(q^{3},q^{3-n+6s},q^{3+n+3s};q^3)_{k}}q^{3k}.
\end{align}
Conducting the replacements $a\mapsto q^{6s-1}$, $b\mapsto
q^{-1+n}$, $c\mapsto q^{-1-n+3s}$, $d\mapsto q^{3s+1}$, $q\mapsto
q^{3}$ in the identity \eqref{Dixon}, there is
\begin{align}
&\sum_{k=0}^{(n+1)/3-s}\frac{1-q^{6s-1+6k}}{1-q^{6s-1}}
\frac{(q^{6s-1},q^{-1+n},q^{-1-n+3s};q^3)_{k}}
{(q^{3},q^{3-n+6s},a^{3+n+3s};q^3)_{k}}q^{3k}
 \notag \\&\quad\:=
\frac{(q^{2-n+3s},q^{6s+2};q^3)_{(n+1)/3-s}}{(q^{3-n+6s},q^{3s+1};q^3)_{(n+1)/3-s}}
 \notag\\&\quad\:=0.
 \label{eq:wei-hh}
\end{align}
Substituting \eqref{eq:wei-hh} into \eqref{eq:wei-gg}, we obtain the
following equation:
\begin{align*}
&\sum_{k=s}^{(n+1)/3+s}[6k-1]\frac{(q^{-1+n};q^3)_{k-s}(q^{-1};q^3)_{k+s}(q^{-1-n};q^3)_{k}}
{(q^{3};q^3)_{k-s}(q^{3-n};q^3)_{k+s}(q^{3+n};q^3)_{k}}q^{3k} =0.
\end{align*}
Hence we arrive at the $q$-congruence \eqref{eq:wei-ff} to complete
the proof.
\end{proof}

\begin{lem}\label{lem-c}
Let $n,s$ be nonnegative integers such that $n\equiv 2\pmod 3$. Then
modulo $1-aq^n$,
\begin{align}\label{eq:wei-ii}
&\sum_{k=s}^{(n+1)/3+s}[6k-1]\frac{(aq^{-1};q^3)_{k-s}(q^{-1};q^3)_{k+s}(q^{-1}/a;q^3)_{k}}
{(q^{3};q^3)_{k-s}(q^{3}/a;q^3)_{k+s}(aq^{3};q^3)_{k}}q^{3k}
\equiv0.
\end{align}
\end{lem}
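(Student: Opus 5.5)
Following Lemma \ref{lem-b}, the plan is to set $a=q^{-n}$, i.e.\ work modulo $1-aq^n$, and show that the left-hand side of \eqref{eq:wei-ii} then vanishes identically. After the substitution the summand becomes
\[
[6k-1]\frac{(q^{-1-n};q^3)_{k-s}(q^{-1};q^3)_{k+s}(q^{-1+n};q^3)_{k}}
{(q^{3};q^3)_{k-s}(q^{3+n};q^3)_{k+s}(q^{3-n};q^3)_{k}}q^{3k}.
\]
The change from Lemma \ref{lem-b} is that the parameters $q^{\pm n}$ are attached to the indices $k-s$ and $k$ in the opposite way.

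\emph{Degenerate terms.} The factor $(q^{-1-n};q^3)_{k-s}$ vanishes once $k-s>(n+1)/3$. The sum stops at $k=(n+1)/3+s$, so this truncation is automatic. One must also check that no denominator vanishes. The factor $(q^{3-n};q^3)_k$ has zero factors only when $3j=n$, which is impossible since $n\equiv2\pmod 3$. The other denominators are harmless. Hence the substituted sum is a genuine terminating sum of rational functions.

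\emph{Reindexing.} Shift $k\mapsto k+s$ and pull out the $k=0$ term:
\[
[6s-1]\frac{(q^{-1};q^3)_{2s}(q^{-1+n};q^3)_{s}}{(q^{3+n};q^3)_{2s}(q^{3-n};q^3)_{s}}q^{3s}.
\]
What remains is a well-poised sum
\[
\sum_{k=0}^{(n+1)/3}\frac{1-q^{6s-1+6k}}{1-q^{6s-1}}
\frac{(q^{6s-1},q^{-1-n},q^{-1+n+3s};q^3)_k}{(q^3,q^{3+n+6s},q^{3-n+3s};q^3)_k}q^{3k}.
\]
Apply \eqref{Dixon} with $q\mapsto q^3$, $a\mapsto q^{6s-1}$, $b\mapsto q^{-1-n}$, $c\mapsto q^{-1+n+3s}$, $d\mapsto q^{3s+1}$. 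The last choice is forced by $aq^3/d=q^{3s+1}$, which makes the fourth numerator parameter cancel against $aq/bcd=q^{3s+1}$ up to the well-poised pair.

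The series terminates because $b=q^{-1-n}=q^{-3N}$ with $N=(n+1)/3$, so the terminating form of \eqref{Dixon} applies:
\[
\frac{(aq,aq/cd;q)_N}{(aq/c,aq/bcd\cdot\ldots;q)_N}
\quad\text{(terminating form)}.
\]
Substituting yields a ratio of the form
\[
\frac{(q^{6s+2},q^{2-n};q^3)_{N}}{(q^{3s+3-n}\cdots,q^{3s+1};q^3)_N}.
\]
Here $(q^{2-n};q^3)_N$ contains the factor $1-q^{0}$, since $2-n+3j=0$ for $j=(n-2)/3<N$. The sum is therefore $0$, which is what we want.

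\emph{Main obstacle.} The hardest step is getting the parameter bookkeeping right, so that the terminating ratio provably contains a vanishing factor while its denominator stays nonzero. In the terminating form $\frac{(aq,aq/cd;q)_N}{(aq/c,aq/d;q)_N}$ I expect the numerator $(aq/cd;q^3)_N=(q^{2-n};q^3)_N$ to vanish. I must check that $(aq/c;q^3)_N=(q^{3-n+3s};q^3)_N$ and $(aq/d;q^3)_N=(q^{3s+1};q^3)_N$ are nonzero. The second is clear. The first vanishes only if $3-n+3s+3j=0$, impossible mod $3$. Unlike Lemma \ref{lem-b}, no restriction on $s$ should be needed, consistent with the hypothesis as stated.

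If $s$ is so large that the index range is partially empty, the reindexed sum must still be handled. Here the truncation from $(q^{-1-n};q^3)_{k}$ at $k\le N$ coincides with the summation range, so nothing further is needed. The prefactor is finite because its denominators again avoid zero by the mod-$3$ argument. This gives \eqref{eq:wei-ii}.
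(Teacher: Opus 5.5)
Your proposal is correct and is essentially the paper's own proof. Like the paper, you specialize $a=q^{-n}$, shift $k\mapsto k+s$, pull out the $k=0$ factor, and evaluate the terminating well-poised sum by \eqref{Dixon} with $q\mapsto q^3$, $a\mapsto q^{6s-1}$, $b\mapsto q^{-1-n}$, $c\mapsto q^{-1+n+3s}$, $d\mapsto q^{3s+1}$; the factor $(q^{2-n};q^3)_{(n+1)/3}$ then kills the sum with no restriction on $s$.

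One slip in your justification of $d$: the choice $d=q^{3s+1}$ works because $d=aq^3/d$, so the $d$-pair cancels, while $aq^3/(bcd)=q^3$ (not $q^{3s+1}$) supplies the factor $q^{3k}$. The correct terminating evaluation is the one you give later, $(aq^3,aq^3/cd;q^3)_N/(aq^3/c,aq^3/d;q^3)_N$.
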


\begin{proof}
The $a=q^{-n}$ case of the the expression on the left-hand side of
\eqref{eq:wei-ii} equals
\begin{align}\label{eq:wei-jj}
&\sum_{k=s}^{(n+1)/3+s}[6k-1]\frac{(q^{-1-n};q^3)_{k-s}(q^{-1};q^3)_{k+s}(q^{-1+n};q^3)_{k}}
{(q^{3};q^3)_{k-s}(q^{3+n};q^3)_{k+s}(q^{3-n};q^3)_{k}}q^{3k}
\notag\\&\quad=
\sum_{k=0}^{(n+1)/3}[6k+6s-1]\frac{(q^{-1-n};q^3)_{k}(q^{-1};q^3)_{k+2s}(q^{-1+n};q^3)_{k+s}}
{(q^{3};q^3)_{k}(q^{3+n};q^3)_{k+2s}(q^{3-n};q^3)_{k+s}}q^{3k+3s}
\notag\\&\quad=
 [6s-1]\frac{(q^{-1};q^3)_{2s}(q^{-1+n};q^3)_{s}}
{(q^{3+n};q^3)_{2s}(q^{3-n};q^3)_{s}}q^{3s}
 \notag\\&\qquad\times
\sum_{k=0}^{(n+1)/3}\frac{1-q^{6s-1+6k}}{1-q^{6s-1}}
\frac{(q^{6s-1},q^{-1-n},q^{-1+n+3s};q^3)_{k}}
{(q^{3},q^{3+n+6s},q^{3-n+3s};q^3)_{k}}q^{3k}.
\end{align}
Making the replacements $a\mapsto q^{6s-1}$, $b\mapsto q^{-1-n}$,
$c\mapsto q^{-1+n+3s}$, $d\mapsto q^{3s+1}$, $q\mapsto q^{3}$ in the
identity \eqref{Dixon}, we have
\begin{align}
&\sum_{k=0}^{(n+1)/3}\frac{1-q^{6s-1+6k}}{1-q^{6s-1}}
\frac{(q^{6s-1},q^{-1-n},q^{-1+n+3s};q^3)_{k}}
{(q^{3},q^{3+n+6s},q^{3-n+3s};q^3)_{k}}q^{3k}
 \notag \\&\quad\:=
\frac{(q^{2-n},q^{6s+2};q^3)_{(n+1)/3}}{(q^{3-n+3s},q^{3s+1};q^3)_{(n+1)/3}}
 \notag\\&\quad\:=0.
 \label{eq:wei-kk}
\end{align}
Substituting \eqref{eq:wei-kk} into \eqref{eq:wei-jj}, we get the
following equation:
\begin{align*}
&\sum_{k=s}^{(n+1)/3+s}[6k-1]\frac{(q^{-1-n};q^3)_{k-s}(q^{-1};q^3)_{k+s}(q^{-1+n};q^3)_{k}}
{(q^{3};q^3)_{k-s}(q^{3+n};q^3)_{k+s}(q^{3-n};q^3)_{k}}q^{3k} =0.
\end{align*}
Therefore, we are led to the $q$-congruence \eqref{eq:wei-ii} to
complete the proof.
\end{proof}

Now we are ready to prove Theorem \ref{thm-a}.

\begin{proof}[Proof of Theorem \ref{thm-a}]
Considering that the polynomials $\Phi_n(q)$, $a-q^{n}$, and
$1-aq^{n}$ in $q$ are pairwise relatively prime, we can derive, from
\eqref{eq:wei-aa}, \eqref{eq:wei-ff}, and \eqref{eq:wei-ii}, the
following  $q$-supercongruence: for $s\leq (n-2)/3$, modulo
$\Phi_n(q)(a-q^n)(1-aq^n)$,
\begin{align*}
&\sum_{k=s}^{(n+1)/3+s}[6k-1]\frac{(aq^{-1};q^3)_{k-s}(q^{-1};q^3)_{k+s}(q^{-1}/a;q^3)_{k}}
{(q^{3};q^3)_{k-s}(q^{3}/a;q^3)_{k+s}(aq^{3};q^3)_{k}}q^{3k}
\equiv0.
\end{align*}
Taking $a=1$ in this $q$-supercongruence, we can deduce
\eqref{eq:wei-a} in the end.
\end{proof}

%%%%%%%%%%%%%%%%%%%%%%%%%%%%%%%%%%%%%%%%%%%%%%%%%%%%%%%%%%%%%%%%%%%%%%%%%%%%%%%%%%%%%%%%%%%%%%%%%%%%%%%%%%%%%%%%%%%%%%%%%%%%%%%%%%%%%%%%%%%%%%%%%%%%%%%%%%%%%%%%%%%%%%%%%%%%%%%%%%%%%%%%%%%%%%%%%%%%%%%%%%%%%%%%%%%
\section{Proof of Theorem \ref{thm-b}}
%%%%%%%%%%%%%%%%%%%%%%%%%%%%%%%%%%%%%%%%%%%%%%%%%%%%%%%%%%%%%%%%%%%%%%%%%%%%%%%%%%%%%%%%%%%%%%%%%%%%%%%%%%%%%%%%%

For the aim to prove Theorem \ref{thm-b}, we require the following
three lemmas.

\begin{lem}\label{lem-d}
Let $n>1,s$ be nonnegative integers such that $n\equiv 1\pmod 3$ and
$s\leq (n-4)/3$. Then
\begin{align}\label{eq:wei-aaa}
&\sum_{k=s}^{(2n+1)/3+s}[6k-1]\frac{(aq^{-1};q^3)_{k-s}(q^{-1};q^3)_{k+s}(q^{-1}/a;q^3)_{k}}
{(q^{3};q^3)_{k-s}(q^{3}/a;q^3)_{k+s}(aq^{3};q^3)_{k}}q^{3k}
\equiv0\pmod {\Phi_n(q)}.
\end{align}
\end{lem}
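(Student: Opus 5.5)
We follow the proof of Lemma \ref{lem-a}, now with $M=(2n+1)/3$ in place of $(n+1)/3$. We apply the Guo--Schlosser congruence \cite[Lemma 4]{GS3} quoted there with $d=3$ and $m=M$. This is legitimate: $3M=2n+1\equiv1\pmod n$, and $M\le n-1$ because $n\ge4$. It gives, for $0\le k\le M$,
\[
\frac{(aq^{-1};q^3)_{M-k}}{(q^3/a;q^3)_{M-k}}\equiv(-a)^{M-2k}\frac{(aq^{-1};q^3)_k}{(q^3/a;q^3)_k}\,q^{M(3M-5)/2+4k}\pmod{\Phi_n(q)}.
\]
The same bookkeeping that produced \eqref{eq:wei-cc} and \eqref{eq:wei-dd} then yields analogues of those two congruences, with $(n+1)/3$ replaced by $M$ and suitably modified powers of $q$. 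These hold for $s\le k\le M-s$. The same formula with $a\mapsto1/a$ handles the pair $(q^{-1}/a;q^3)_{M-k}/(aq^3;q^3)_{M-k}$, and $a=1$ handles the middle factor.

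\textbf{Case $s\le (2n+1)/6$.} For $s\le k\le M-s$ we pair the term of index $k$ with the term of index $M-k$. The prefactor satisfies $[6(M-k)-1]=[4n+1-6k]\equiv -q^{1-6k}[6k-1]\pmod{\Phi_n(q)}$. Multiplying the three ratio congruences and using $q^n\equiv1$ should make all powers of $a$ cancel. The powers of $q$ should collapse to a power $q^{jn}$ with an exponent for which $q^{jn}\equiv1$ or, via $q^{n/2}\equiv-1$ when $n$ is even, $\equiv-1$. The upshot should be that the $(M-k)$-th term is congruent to minus the $k$-th term, so $\sum_{k=s}^{M-s}\equiv0\pmod{\Phi_n(q)}$.

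For the remaining range $M-s<k\le M+s$, the factor $(q^{-1};q^3)_{k+s}$ contains $1-q^{3i-1}$ with $i=M$, i.e.\ the factor $1-q^{2n}$, which is divisible by $\Phi_n(q)$. Note that $i=(n+1)/3$ is not an integer, so the first such factor is indeed $1-q^{2n}$, and it appears because $k+s-1\ge M$. We must also check that the denominators are prime to $\Phi_n(q)$. Here the hypothesis $s\le(n-4)/3$ enters: it gives $k+s\le M+2s\le(4n-7)/3<n$, so $(q^3;q^3)_{k+s}$, $(q^3;q^3)_{k-s}$ and $(q^3;q^3)_k$ contain no factor $1-q^{3n}$, nor any other factor divisible by $\Phi_n(q)$ since $3i\not\equiv0\pmod n$ for $0<i<n$. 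The factors involving $a$ are harmless as elements of $\mathbb{Z}[q](a)$.

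\textbf{Case $s>(2n+1)/6$.} Since $q^{-1-2n}\equiv q^{-1}\pmod{\Phi_n(q)}$, it suffices to show that the sum with $(q^{-1};q^3)_{k+s}$ replaced by $(q^{-1-2n};q^3)_{k+s}$ vanishes identically. We shift $k\mapsto k+s$. The factor $(q^{-1-2n};q^3)_{k+2s}$ vanishes once $k+2s>M$, so the sum truncates to $\sum_{k=0}^{M-2s}$. This sum is empty because $M-2s<0$.

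The main obstacle is the exponent bookkeeping in the pairing step. One must confirm that after multiplying the three ratio congruences, the prefactor relation and $q^{3(M-k)}/q^{3k}$, the total power of $q$ is $\equiv -1$ times a power of $q^n$ which reduces to $\pm1$ with the right sign. I would first verify this with $s=0$ and small $n$ (e.g.\ $n=4$, $n=7$) before writing the general exponent computation.
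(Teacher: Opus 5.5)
Your pairing step fails. The problem is the sign itself, not the exponent bookkeeping. Write $T_k$ for the $k$-th summand and $E=M(3M-5)/2$. The three ratio congruences multiply to $(-1)^{M}q^{3E+12k}$: all powers of $a$ cancel, and so do the $s$-dependent exponents $\pm(s-6ks)$. Combining this with $[4n+1-6k]\equiv-q^{1-6k}[6k-1]$ and $q^{3(M-k)}\equiv q^{1-3k}$ gives
\[
T_{M-k}\equiv-(-1)^{M}q^{2+3E}\,T_k\pmod{\Phi_n(q)},\qquad 2+3E=2+(2n+1)(n-2)=n(2n-3).
\]
So $q^{2+3E}\equiv1$ for every $n$. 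There is no $q^{n/2}\equiv-1$ effect here, unlike the exponent $n(n-3)/2$ that occurs in Lemma~\ref{lem-a}. Meanwhile $M=(2n+1)/3$ is odd for every $n\equiv1\pmod 3$. Hence $T_{M-k}\equiv+T_k$. For instance, with $n=4$, $s=0$ one has $T_0=[-1]=-q^{-1}$ and $T_3\equiv-q^{-1}\pmod{\Phi_4(q)}$. The pairing therefore only shows $\sum_{k=s}^{M-s}T_k\equiv2\sum_{k=s}^{(n-1)/3}T_k$, and nothing cancels.

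There was a warning sign: your argument never uses $s\le(n-4)/3$ in an essential way. Your bound $(4n-7)/3<n$ is false for $n\ge7$ anyway, and the only $a$-free denominator to check is $(q^3;q^3)_{k-s}$, which is fine since $k-s\le M<n$. Yet the hypothesis cannot be dropped. For $n=4$, $s=1$, a case your split would cover, the sum is $\equiv T_1+T_2\equiv2T_1\pmod{\Phi_4(q)}$, and at $a=1$, $q=i$ this equals $-2i\neq0$.

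What is missing is an actual closed-form evaluation, which the paper obtains from \eqref{Dixon}. It replaces $q^{-1}$, $q^3/a$, $aq^3$, $[6k-1]$, $q^{3k}$ by the $\Phi_n$-congruent quantities $q^{-1-2n}$, $q^{3-2n}/a$, $aq^{3-2n}$, $[6k-1-2n]$, $q^{(3-n)k}$. After $k\mapsto k+s$, the sum becomes a prefactor times a terminating very-well-poised series. This series is \eqref{Dixon} under $(a,b,c,d,q)\mapsto(q^{-1-2n+6s},aq^{-1},q^{3s-1}/a,q^{1-n+3s},q^3)$. The $d$-factors cancel because $(q^{1-n+3s})^2=q^{-1-2n+6s}\cdot q^3$. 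The value of that series is
\[
\frac{2(q^{4-2n+3s},q^{2-2n+6s};q^3)_{(n-1)/3-s}}{(aq^{3-2n+3s},q^{3-2n+6s}/a;q^3)_{(n-1)/3-s}}.
\]
The factor $2$ reflects exactly the symmetry $T_{M-k}\equiv T_k$ that you would have found. The factor $(q^{4-2n+3s};q^3)_{(n-1)/3-s}$ contains $1-q^{-n}$ precisely when $s\le(n-4)/3$, and this is where the hypothesis enters. The paper writes this value as $=0$; strictly, it is a nonzero rational function divisible by $\Phi_n(q)$, which is all the lemma needs.
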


\begin{proof}
Conducting the replacements $a\mapsto q^{-1-2n+6s}$, $b\mapsto
aq^{-1}$, $c\mapsto q^{3s-1}/a$, $d\mapsto q^{1-n+3s}$, $q\mapsto
q^{3}$ in the identity \eqref{Dixon}, it is not difficult to
understand that
\begin{align*}
&\sum_{k=0}^{(2n+1)/3-2s}\frac{1-q^{-1-2n+6s+6k}}{1-q^{-1-2n+6s}}
\frac{(q^{-1-2n+6s},aq^{-1},q^{3s-1}/a;q^3)_{k}}
{(q^{3},q^{3-2n+6s}/a,a^{3-2n+3s};q^3)_{k}}q^{(3-n)k}
 \notag \\&\quad\:=
\frac{2(q^{4-2n+3s},q^{2-2n+6s};q^3)_{(n-1)/3-s}}{(aq^{3-2n+3s},q^{3-2n+6s}/a;q^3)_{(n-1)/3-s}}
 \notag\\[1mm]&\quad\:=0.
\end{align*}
Then we can calculate as follows:
\begin{align*}
&\sum_{k=s}^{(2n+1)/3+s}[6k-1-2n]\frac{(aq^{-1};q^3)_{k-s}(q^{-1-2n};q^3)_{k+s}(q^{-1}/a;q^3)_{k}}
{(q^{3};q^3)_{k-s}(q^{3-2n}/a;q^3)_{k+s}(aq^{3-2n};q^3)_{k}}q^{(3-n)k}
\notag\\&\quad=
\sum_{k=0}^{(2n+1)/3}[6k+6s-1-2n]\frac{(aq^{-1};q^3)_{k}(q^{-1-2n};q^3)_{k+2s}(q^{-1}/a;q^3)_{k+s}}
{(q^{3};q^3)_{k}(q^{3-2n}/a;q^3)_{k+2s}(aq^{3-2n};q^3)_{k+s}}q^{(3-n)(k+s)}
\notag
\end{align*}
\begin{align*}
\notag\\&\quad=
 [6s-1-2n]\frac{(q^{-1-2n};q^3)_{2s}(q^{-1}/a;q^3)_{s}}
{(q^{3-2n}/a;q^3)_{2s}(aq^{3-2n};q^3)_{s}}q^{(3-n)s}
 \notag\\&\qquad\times
\sum_{k=0}^{(2n+1)/3-2s}\frac{1-q^{-1-2n+6s+6k}}{1-q^{-1-2n+6s}}
\frac{(q^{-1-2n+6s},aq^{-1},q^{3s-1}/a;q^3)_{k}}
{(q^{3},q^{3-2n+6s}/a,a^{3-2n+3s};q^3)_{k}}q^{(3-n)k}
\notag\\[1mm]&\quad=0.
\end{align*}
So it is routine to show the $q$-congruence \eqref{eq:wei-aaa} via
$q^{n}\equiv1\pmod{\Phi_n(q)}$.
\end{proof}

\begin{lem}\label{lem-b}
Let $n>1,s$ be nonnegative integers such that $n\equiv 1\pmod 3$ and
$s\leq (2n-2)/3$. Then modulo $a-q^{2n}$,
\begin{align}\label{eq:wei-bbb}
&\sum_{k=s}^{(2n+1)/3+s}[6k-1]\frac{(aq^{-1};q^3)_{k-s}(q^{-1};q^3)_{k+s}(q^{-1}/a;q^3)_{k}}
{(q^{3};q^3)_{k-s}(q^{3}/a;q^3)_{k+s}(aq^{3};q^3)_{k}}q^{3k}
\equiv0.
\end{align}
\end{lem}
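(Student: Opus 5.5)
The plan is to copy the proof of Lemma~\ref{lem-b} for $n\equiv2\pmod 3$. Since the left-hand side of \eqref{eq:wei-bbb} is a rational function of $a$, it suffices to show two things: that it has no pole at $a=q^{2n}$, and that its value at $a=q^{2n}$ vanishes identically in $q$.

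\textbf{Step 1: no pole at $a=q^{2n}$.} The only denominators that involve $a$ are $(q^3/a;q^3)_{k+s}$ and $(aq^3;q^3)_k$. At $a=q^{2n}$ these have factors $1-q^{3-2n+3i}$ and $1-q^{3+2n+3i}$. The second kind is never zero. The first kind is zero only if $3\mid 2n-3$, which is impossible because $2n\equiv 2\pmod 3$.

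\textbf{Step 2: rewrite the specialized sum.} Put $a=q^{2n}$ and shift $k\mapsto k+s$. The sum becomes
\[
\sum_{k=0}^{(2n+1)/3}[6k+6s-1]\frac{(q^{2n-1};q^3)_{k}(q^{-1};q^3)_{k+2s}(q^{-1-2n};q^3)_{k+s}}{(q^{3};q^3)_{k}(q^{3-2n};q^3)_{k+2s}(q^{3+2n};q^3)_{k+s}}q^{3k+3s}.
\]
Let $N=(2n+1)/3-s$. The factor $(q^{-1-2n};q^3)_{k+s}$ vanishes once $k+s>(2n+1)/3$, so every term with $k>N$ is zero. Next I pull out the $k=0$ term,
\[
[6s-1]\frac{(q^{-1};q^3)_{2s}(q^{-1-2n};q^3)_{s}}{(q^{3-2n};q^3)_{2s}(q^{3+2n};q^3)_{s}}q^{3s},
\]
whose denominator is nonzero by the same argument as in Step 1. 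What remains is
\[
\sum_{k=0}^{N}\frac{1-q^{6s-1+6k}}{1-q^{6s-1}}\frac{(q^{6s-1},q^{2n-1},q^{-1-2n+3s};q^3)_k}{(q^3,q^{3-2n+6s},q^{3+2n+3s};q^3)_k}q^{3k}.
\]

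\textbf{Step 3: apply \eqref{Dixon}.} I would make the replacements $a\mapsto q^{6s-1}$, $b\mapsto q^{2n-1}$, $c\mapsto q^{-1-2n+3s}=q^{-3N}$, $d\mapsto q^{3s+1}$, $q\mapsto q^3$. The pair $d$ and $aq^3/d$ both equal $q^{3s+1}$ and cancel. The remaining denominator parameters are $aq^3/b=q^{6s+3-2n}$ and $aq^3/c=q^{3s+3+2n}$, and the argument is $aq^3/(bcd)=q^3$. These match the sum in Step 2 exactly. Because $c=q^{-3N}$, the series terminates and \eqref{Dixon} takes its terminating form. The right-hand side then contains the factor
\[
(aq^3/bd;q^3)_N=(q^{3s+2-2n};q^3)_N .
\]

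\textbf{Step 4: show the product vanishes.} The index $j=(2n-2)/3-s$ satisfies $0\le j\le N-1$; the lower bound is exactly the hypothesis $s\le(2n-2)/3$. For this $j$ we have $3s+2-2n+3j=0$, so the factor $1-q^{3s+2-2n+3j}$ is zero. I still have to check that the denominator $(aq^3/b,aq^3/c,aq^3/d,aq^3/bcd;q^3)_N$ has no zero factor. This holds: $q^{6s+3-2n+3j}\neq1$ because $3\nmid 2n-3$, and the other three parameters are positive powers of $q$. Hence the sum is $0$, which gives \eqref{eq:wei-bbb}.

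The main obstacle I expect is the bookkeeping in Step 2: confirming that the parameters line up exactly, with $aq/(bcd)$ equal to $q^3$, and that no hidden denominator vanishes. Without that, the product formula cannot be applied cleanly.
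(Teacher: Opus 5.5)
Your proposal is correct and follows the paper's proof essentially step for step. You specialize $a=q^{2n}$, shift $k\mapsto k+s$, and factor out the $k=0$ term. You then evaluate the remaining terminating very-well-poised ${}_6\phi_5$ by \eqref{Dixon} with $a\mapsto q^{6s-1}$, $b\mapsto q^{2n-1}$, $c\mapsto q^{-1-2n+3s}$, $d\mapsto q^{3s+1}$, $q\mapsto q^3$; its product side vanishes because of the factor $(q^{2-2n+3s};q^3)_{(2n+1)/3-s}$.

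Your extra checks are details the paper leaves implicit: there is no pole at $a=q^{2n}$, the vanishing index $j=(2n-2)/3-s$ lies in range, and the relevant denominators are nonzero because $2n\equiv 2\pmod 3$.
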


\begin{proof}
The $a=q^{2n}$ case of the left-hand side of \eqref{eq:wei-bbb} is
equal to
\begin{align}\label{eq:wei-ccc}
&\sum_{k=s}^{(2n+1)/3+s}[6k-1]\frac{(q^{-1+2n};q^3)_{k-s}(q^{-1};q^3)_{k+s}(q^{-1-2n};q^3)_{k}}
{(q^{3};q^3)_{k-s}(q^{3-2n};q^3)_{k+s}(q^{3+2n};q^3)_{k}}q^{3k}
\notag\\&\quad=
\sum_{k=0}^{(2n+1)/3}[6k+6s-1]\frac{(q^{-1+2n};q^3)_{k}(q^{-1};q^3)_{k+2s}(q^{-1-2n};q^3)_{k+s}}
{(q^{3};q^3)_{k}(q^{3-2n};q^3)_{k+2s}(q^{3+2n};q^3)_{k+s}}q^{3k+3s}
\notag\\&\quad=
 [6s-1]\frac{(q^{-1};q^3)_{2s}(q^{-1-2n};q^3)_{s}}
{(q^{3-2n};q^3)_{2s}(q^{3+2n};q^3)_{s}}q^{3s}
 \notag\\&\qquad\times
\sum_{k=0}^{(2n+1)/3-s}\frac{1-q^{6s-1+6k}}{1-q^{6s-1}}
\frac{(q^{6s-1},q^{-1+2n},q^{-1-2n+3s};q^3)_{k}}
{(q^{3},q^{3-2n+6s},q^{3+2n+3s};q^3)_{k}}q^{3k}.
\end{align}
Performing the replacements $a\mapsto q^{6s-1}$, $b\mapsto
q^{-1+2n}$, $c\mapsto q^{-1-2n+3s}$, $d\mapsto q^{3s+1}$, $q\mapsto
q^{3}$ in the identity \eqref{Dixon}, we obtain
\begin{align}
&\sum_{k=0}^{(2n+1)/3-s}\frac{1-q^{6s-1+6k}}{1-q^{6s-1}}
\frac{(q^{6s-1},q^{-1+2n},q^{-1-2n+3s};q^3)_{k}}
{(q^{3},q^{3-2n+6s},a^{3+2n+3s};q^3)_{k}}q^{3k}
 \notag \\&\quad\:=
\frac{(q^{2-2n+3s},q^{6s+2};q^3)_{(2n+1)/3-s}}{(q^{3-2n+6s},q^{3s+1};q^3)_{(2n+1)/3-s}}
 \notag\\&\quad\:=0.
 \label{eq:wei-ddd}
\end{align}
Substituting \eqref{eq:wei-ddd} into \eqref{eq:wei-ccc}, there holds
the following equation:
\begin{align*}
&\sum_{k=s}^{(2n+1)/3+s}[6k-1]\frac{(q^{-1+2n};q^3)_{k-s}(q^{-1};q^3)_{k+s}(q^{-1-2n};q^3)_{k}}
{(q^{3};q^3)_{k-s}(q^{3-2n};q^3)_{k+s}(q^{3+2n};q^3)_{k}}q^{3k} =0.
\end{align*}
Thus we catch hold of the $q$-congruence \eqref{eq:wei-bbb} to
complete the proof.
\end{proof}

\begin{lem}\label{lem-f}
Let $n>1,s$ be nonnegative integers such that $n\equiv 1\pmod 3$.
Then modulo $1-aq^{2n}$,
\begin{align}\label{eq:wei-eee}
&\sum_{k=s}^{(2n+1)/3+s}[6k-1]\frac{(aq^{-1};q^3)_{k-s}(q^{-1};q^3)_{k+s}(q^{-1}/a;q^3)_{k}}
{(q^{3};q^3)_{k-s}(q^{3}/a;q^3)_{k+s}(aq^{3};q^3)_{k}}q^{3k}
\equiv0.
\end{align}
\end{lem}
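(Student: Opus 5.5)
Following Lemma \ref{lem-c}, we set $a=q^{-2n}$ and show that the left-hand side of \eqref{eq:wei-eee} vanishes identically. The approach is to shift the index and then evaluate the resulting terminating sum with \eqref{Dixon}.

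With $a=q^{-2n}$ the summand becomes
\[
[6k-1]\frac{(q^{-1-2n};q^3)_{k-s}(q^{-1};q^3)_{k+s}(q^{-1+2n};q^3)_{k}}{(q^{3};q^3)_{k-s}(q^{3+2n};q^3)_{k+s}(q^{3-2n};q^3)_{k}}q^{3k}.
\]
We replace $k$ by $k+s$, so the sum runs over $0\le k\le (2n+1)/3$. Next we pull out the $k=0$ factor
\[
[6s-1]\frac{(q^{-1};q^3)_{2s}(q^{-1+2n};q^3)_{s}}{(q^{3+2n};q^3)_{2s}(q^{3-2n};q^3)_{s}}q^{3s}.
\]
What remains is the sum
\[
\sum_{k=0}^{(2n+1)/3}\frac{1-q^{6s-1+6k}}{1-q^{6s-1}}\frac{(q^{6s-1},q^{-1-2n},q^{-1+2n+3s};q^3)_k}{(q^3,q^{3+2n+6s},q^{3-2n+3s};q^3)_k}q^{3k}.
\]
Here we use $(x;q^3)_{k+m}=(x;q^3)_m(xq^{3m};q^3)_k$, together with the fact that the factor $(q^{-1};q^3)_{k+2s}/(q^{3};q^3)_{k}$ combines with the $[6k+6s-1]$ to give the very-well-poised structure. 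This is exactly the manipulation used in \eqref{eq:wei-jj}, with $n$ replaced by $2n$.

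Now apply \eqref{Dixon} with $a\mapsto q^{6s-1}$, $b\mapsto q^{-1-2n}$, $c\mapsto q^{-1+2n+3s}$, $d\mapsto q^{3s+1}$ and $q\mapsto q^3$. The parameter choice checks out:
\begin{itemize}
\item $aq/b=q^{3+2n+6s}$ and $aq/c=q^{3-2n+3s}$ match the denominators;
\item $aq/d=q^{3s+1}\cdot q^{0}$, so $(d;q^3)_k/(aq/d;q^3)_k=1$ and the fourth pair cancels;
\item $aq/(bcd)=q^{6s+2}/q^{6s-1}=q^3$, which gives the argument $q^{3k}$.
\end{itemize}
Since $b=q^{-1-2n}=q^{-3\cdot(2n+1)/3}$, the series terminates at $k=(2n+1)/3$, and the infinite-product side turns into the finite ratio
\[
\frac{(q^{2-2n},q^{6s+2};q^3)_{(2n+1)/3}}{(q^{3-2n+3s},q^{3s+1};q^3)_{(2n+1)/3}}.
\]
This is the terminating form used in \eqref{eq:wei-kk}.

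The numerator factor $(q^{2-2n};q^3)_{(2n+1)/3}$ contains $1-q^{0}=0$, because $2-2n\equiv 0\pmod 3$ when $n\equiv1\pmod3$, and $2-2n+3j=0$ for $j=(2n-2)/3<(2n+1)/3$. So the sum is zero, provided the denominators do not vanish.

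The main obstacle is exactly this nonvanishing: checking that no factor like $(q^{3-2n};q^3)_k$ or $(q^{3-2n+3s};q^3)_k$ in the original or shifted denominators hits $1-q^0$ inside the range of summation. Since $3-2n\not\equiv 0\pmod 3$, these factors never vanish, so the identity holds as a rational-function identity for generic $s$. If there is an issue of $0/0$ in the original summand, we would handle it by treating $a$ as a variable and taking the limit $a\to q^{-2n}$. Granting this, the left-hand side vanishes at $a=q^{-2n}$ and is a rational function in $a$ whose denominator is coprime to $1-aq^{2n}$, which gives \eqref{eq:wei-eee}.
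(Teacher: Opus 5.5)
Your proposal is correct and follows the paper's proof step for step: specialize $a=q^{-2n}$, shift $k\mapsto k+s$, factor out the $k=0$ term, apply \eqref{Dixon} with $a\mapsto q^{6s-1}$, $b\mapsto q^{-1-2n}$, $c\mapsto q^{-1+2n+3s}$, $d\mapsto q^{3s+1}$, $q\mapsto q^3$, and observe that $(q^{2-2n};q^3)_{(2n+1)/3}$ contains the factor $1-q^{0}$. You also check that no denominator vanishes at $a=q^{-2n}$ (the relevant exponents are either positive or $\not\equiv 0\pmod 3$); the paper leaves this implicit, it is what justifies passing from the evaluation to the congruence modulo $1-aq^{2n}$, and it holds for every $s\ge 0$, not just for generic $s$.
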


\begin{proof}
The $a=q^{-2n}$ case of the left-hand side of \eqref{eq:wei-eee}
equals
\begin{align}\label{eq:wei-fff}
&\sum_{k=s}^{(2n+1)/3+s}[6k-1]\frac{(q^{-1-2n};q^3)_{k-s}(q^{-1};q^3)_{k+s}(q^{-1+2n};q^3)_{k}}
{(q^{3};q^3)_{k-s}(q^{3+2n};q^3)_{k+s}(q^{3-2n};q^3)_{k}}q^{3k}
\notag\\&\quad=
\sum_{k=0}^{(2n+1)/3}[6k+6s-1]\frac{(q^{-1-2n};q^3)_{k}(q^{-1};q^3)_{k+2s}(q^{-1+2n};q^3)_{k+s}}
{(q^{3};q^3)_{k}(q^{3+2n};q^3)_{k+2s}(q^{3-2n};q^3)_{k+s}}q^{3k+3s}
\notag\\&\quad=
 [6s-1]\frac{(q^{-1};q^3)_{2s}(q^{-1+2n};q^3)_{s}}
{(q^{3+2n};q^3)_{2s}(q^{3-2n};q^3)_{s}}q^{3s}
 \notag\\&\qquad\times
\sum_{k=0}^{(2n+1)/3}\frac{1-q^{6s-1+6k}}{1-q^{6s-1}}
\frac{(q^{6s-1},q^{-1-2n},q^{-1+2n+3s};q^3)_{k}}
{(q^{3},q^{3+2n+6s},q^{3-2n+3s};q^3)_{k}}q^{3k}.
\end{align}
Employing the replacements $a\mapsto q^{6s-1}$, $b\mapsto
q^{-1-2n}$, $c\mapsto q^{-1+2n+3s}$, $d\mapsto q^{3s+1}$, $q\mapsto
q^{3}$ in the identity \eqref{Dixon}, we get
\begin{align}
&\sum_{k=0}^{(2n+1)/3}\frac{1-q^{6s-1+6k}}{1-q^{6s-1}}
\frac{(q^{6s-1},q^{-1-2n},q^{-1+2n+3s};q^3)_{k}}
{(q^{3},q^{3+2n+6s},q^{3-2n+3s};q^3)_{k}}q^{3k}
 \notag \\&\quad\:=
\frac{(q^{2-2n},q^{6s+2};q^3)_{(2n+1)/3}}{(q^{3-2n+3s},q^{3s+1};q^3)_{(2n+1)/3}}
 \notag\\&\quad\:=0.
 \label{eq:wei-ggg}
\end{align}
Substituting \eqref{eq:wei-ggg} into \eqref{eq:wei-fff}, there is
the following equation:
\begin{align*}
&\sum_{k=s}^{(2n+1)/3+s}[6k-1]\frac{(q^{-1-2n};q^3)_{k-s}(q^{-1};q^3)_{k+s}(q^{-1+2n};q^3)_{k}}
{(q^{3};q^3)_{k-s}(q^{3+2n};q^3)_{k+s}(q^{3-2n};q^3)_{k}}q^{3k} =0.
\end{align*}
Hence we grasp hold of the $q$-congruence \eqref{eq:wei-eee} to
complete the proof.
\end{proof}

Now we are prepared to prove Theorem \ref{thm-b}.

\begin{proof}[Proof of Theorem \ref{thm-b}]
Since the polynomials $\Phi_n(q)$, $a-q^{2n}$, and $1-aq^{2n}$ in
$q$ are pairwise relatively prime, we can derive, from
\eqref{eq:wei-aaa}, \eqref{eq:wei-bbb}, and \eqref{eq:wei-eee}, the
following  $q$-supercongruence: for $s\leq (n-4)/3$, modulo
$\Phi_n(q)(a-q^{2n})(1-aq^{2n})$,
\begin{align*}
&\sum_{k=s}^{(2n+1)/3+s}[6k-1]\frac{(aq^{-1};q^3)_{k-s}(q^{-1};q^3)_{k+s}(q^{-1}/a;q^3)_{k}}
{(q^{3};q^3)_{k-s}(q^{3}/a;q^3)_{k+s}(aq^{3};q^3)_{k}}q^{3k}
\equiv0.
\end{align*}
It is clear that the factor
$(q^{3}/a;q^3)_{(2n+1)/3+2s}(aq^{3};q^3)_{(2n+1)/3+s}$ is prime to
$\Phi_n(q)$ when $a=1$ and $s\leq(n-4)$/6. Taking $a=1$ in this
$q$-supercongruence, we can deduce \eqref{eq:wei-b} in the end.
\end{proof}

\section{Declarations}
%%%%%%%%%%%%%%%%%%%%%%%%%%%%%%%%%%%%%%%%%%%%%%%%%%%%%%%%%%%%%%%%%%%%%%%%%%%%%%%%%%%%%%%%%%%%%%%%%%%%%%%%%%%%%%%%%
{\bf{Conflicts of interest:}} No potential conflict of interest was
reported by the authors.\\
{\bf{ Availability of data and material:}} Not
applicable. \\
{\bf{Code availability:}} Not applicable.\\
{\bf{Funding:}} The work is supported by the National Natural
Science Foundation of China (No. 12571349).

%%%%%%%%%%%%%%%%%%%%%%%%%%%%%%%%%%%%%%%%%%%%%%%%%%%%%%%%%%%%%%%%%%%%%%%%%%%%%%%%%%%%%%%%%%%%%%%%%%%%%%%%%%%%%%%%%%%%%%%%%%%%%%%%%%%%%%%%%%%%%%%%%%%%%%%%%%%%%%%%%%%%%%%%%%%%%%%%%%%%%%%%%%%%%%%%%%%%%%%%%%

\end{document}